\newcounter{Cntr}
\newtheorem{thm}{Theorem}[section]
\newtheorem{prop}[thm]{Proposition}
\newtheorem{lem}[thm]{Lemma}
\newtheorem{cnj}[thm]{Conjecture}
\newtheorem{prob}[thm]{Problem}
\newtheorem{obs}[thm]{Observation}
\newtheorem{cla}{Claim}[section]
\theoremstyle{definition}
\newtheorem*{prf3}{Proof of Theorem \ref{thm:main2}}
\newtheorem{case}{Case}
\def\qed{\hfill $\Box$}
\begin{document}

\title{Hamiltonian cycles in 2-tough $2K_2$-free graphs} 

\author{%
    Katsuhiro Ota\thanks{E-mail address: \texttt{ohta@math.keio.ac.jp}}
    \qquad Masahiro Sanka\thanks{E-mail address: \texttt{sankamasa@keio.jp}}\\ 
    Department of Mathematics, Keio University, \\
    3-14-1 Hiyoshi, Kohoku-ku, Yokohama 223-8522, Japan
} 

\date{}

\maketitle

\begin{abstract}
    A graph $G$ is called a $2K_2$-free graph if it does not contain $2K_2$ as an induced subgraph. 
    In 2014, Broersma, Patel and Pyatkin showed that every 25-tough $2K_2$-free graph on at least three vertices is Hamiltonian. 
    Recently, Shan improved this result by showing that 3-tough is sufficient instead of 25-tough. 
    In this paper, we show that every 2-tough $2K_2$-free graph on at least three vertices is Hamiltonian, which was conjectured by Gao and Pasechnik. 
\end{abstract}

\noindent
\textbf{Keywords.}
Toughness, Hamiltonian cycle, $2K_2$-free graph, 2-factor 

\section{Introduction}\label{intro}

The toughness was introduced by Chv\'{a}tal \cite{bib:Chvatal} in 1973. 
Let $\omega(G)$ denote the number of components of a graph $G$. 
A graph $G$ is {\it $t$-tough} if $|S| \geq t \cdot \omega(G-S)$ for every subset $S \subset V(G)$ with $\omega(G-S)>1$. 
The {\it toughness} $t(G)$ is the maximum value of $t$ for which $G$ is $t$-tough, or is $\infty$ if $G$ is a complete graph. 
Hence if $G$ is not a complete graph, then
\[t(G)=\min\left\{\dfrac{|S|}{\omega(G-S)} \mid S \subset V(G) \text{ with }\omega(G-S)>1\right\}.\]
A graph $G$ is {\it Hamiltonian} if it contains a {\it Hamiltonian cycle}, i.e., a cycle containing all vertices of $G$. 
It is well known that every Hamiltonian graph is 1-tough. 
However, the converse does not hold.
The following conjecture, proposed by Chv\'{a}tal, is still open. 

\begin{cnj}[Chv\'{a}tal \cite{bib:Chvatal}]\label{cnj:Chvatal}
    There exists a constant $t_0$ such that every $t_0$-tough graph on at least three vertices is Hamiltonian.
\end{cnj}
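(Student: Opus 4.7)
The plan is to attempt Chv\'{a}tal's conjecture by a reduction-and-induction scheme on $|V(G)|$, aiming to extract a finite constant $t_0$ that makes the induction close. The rough idea is to show that any non-Hamiltonian $t_0$-tough graph admits either (i) a structural reduction, via contraction of a ``safe'' edge or deletion of a redundant vertex, to a smaller $t_0$-tough graph whose Hamiltonian cycle lifts back to $G$; or (ii) an extremal configuration, namely a longest non-Hamiltonian cycle $C$, that can be perturbed into a longer cycle using the toughness hypothesis applied to a carefully chosen cutset on $C$.

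First, I would fix $t_0$ strictly greater than $9/4$ --- the current best lower bound arising from the non-Hamiltonian $9/4$-tough constructions of Bauer, Broersma and Veldman --- and large enough that the arithmetic in the cutset estimates below leaves slack. Second, I would carry out the cycle-extension step: take a longest cycle $C$ in $G$, pick $x \in V(G) \setminus V(C)$, and analyze the pattern of $x$'s neighbors on $C$. The goal is to build a set $S \subseteq V(C)$ of size $s$ whose removal leaves strictly more than $s/t_0$ components in $G - S$, thus violating $t_0$-toughness, unless $C$ can be rerouted through $x$ by a Chv\'{a}tal--Erd\H{o}s-style chord exchange. In parallel, I would develop a closure step: identify a non-edge $uv$ whose addition preserves both $t_0$-toughness and non-Hamiltonicity, and iterate until $G$ becomes dense enough that classical Dirac/Ore-type conditions finish the job.

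The chief obstacle is that this conjecture has been open since 1973 despite substantial effort, and no finite $t_0$ is currently known to suffice for arbitrary graphs; only restricted graph classes, such as the $2K_2$-free graphs handled in the present paper, have been resolved. The fundamental difficulty is that toughness is a \emph{global} invariant, whereas nearly every available Hamiltonicity technique --- closure, rotation-extension, Chv\'{a}tal--Erd\H{o}s, P\'{o}sa --- ultimately depends on local leverage such as minimum degree, independence number, or neighborhood unions. Translating the ratio bound $|S|/\omega(G-S) \geq t_0$ into usable local structure is precisely the gap no prior approach has closed, and I expect that any genuine proof will require a new structural idea, perhaps one emerging from the induced-subgraph restrictions and inductive toughness accounting developed in the $2K_2$-free line of work.
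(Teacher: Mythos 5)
This statement is Chv\'{a}tal's conjecture, which the paper does not prove --- it is stated as an open problem (open since 1973), and the paper only resolves the special case of $2K_2$-free graphs (Theorem \ref{thm:main2}). So there is no proof in the paper to compare yours against, and your proposal is not a proof either: it is a research plan in which every step is conditional (``I would fix $t_0$\ldots'', ``I would carry out\ldots'') and no lemma is actually established. You candidly acknowledge this in your final paragraph, which is the correct assessment of the situation.

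To name the gap concretely: the cycle-extension step is where the argument would have to do real work, and it is precisely where it collapses. Given a longest cycle $C$ and $x \notin V(C)$, the standard move is to take $S$ to be the set of neighbors of $x$ on $C$ (or their successors) and argue that $G - S$ has many components; but toughness only gives $|S| \geq t_0\,\omega(G-S)$, and nothing forces $\omega(G-S)$ to be large relative to $|N(x)|$ unless one already has control over how the neighbors of $x$ and of other vertices outside $C$ interleave --- exactly the local structure that an arbitrary $t_0$-tough graph need not provide. This is why the Bauer--Broersma--Veldman constructions defeat every fixed $t < 9/4$, and why no finite $t_0$ is known to work in general; your choice of ``$t_0$ strictly greater than $9/4$'' is only a necessary condition imposed by those counterexamples, not a value for which any argument closes. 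The closure step has the same problem: there is no known closure operation that preserves both toughness and non-Hamiltonicity for general graphs. In short, the proposal correctly diagnoses the obstruction but does not overcome it, and the statement remains a conjecture.
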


Bauer, Broersma and Veldman \cite{bib:Bauer} showed that for any $t<\frac{9}{4}$ there exists a $t$-tough graph which is not Hamiltonian. 
So if Conjecture \ref{cnj:Chvatal} is true, then such a constant $t_0$ must be at least $\frac{9}{4}$.

On the other hand, it is known that a toughness condition gives a $k$-factor of a graph.
Given a graph $G$ and a positive integer $k$, a {\it $k$-factor} of $G$ is a spanning subgraph of $G$ in which every vertex has degree $k$. 
In particular, a 2-factor is a spanning subgraph in which every component is a cycle. 
Thus, a connected 2-factor is a Hamiltonian cycle. 
For the existence of $k$-factors, Enomoto, Jackson, Katerinis and Saito showed the following theorem. 

\begin{thm}[Enomoto et al.~\cite{bib:Enomoto}]
    For $k \geq 1$, every $k$-tough graph on $n$ vertices such that $n \geq k+1$ and $kn$ is even has a $k$-factor.
\end{thm}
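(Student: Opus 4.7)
The plan is to invoke Tutte's $k$-factor criterion: a graph $G$ has a $k$-factor if and only if for every pair of disjoint subsets $S, T \subseteq V(G)$,
$$\eta(S,T) := k|S| - k|T| + \sum_{v \in T} d_{G-S}(v) - q(S,T) \geq 0,$$
where $q(S,T)$ is the number of components $C$ of $G - S - T$ for which $k|V(C)| + e_G(V(C),T)$ is odd. Suppose for contradiction that $G$ admits no $k$-factor, and choose disjoint $S,T$ violating this inequality. A routine parity calculation that uses the hypothesis $kn$ even shows that $\eta(S,T)$ and $kn$ have the same parity, so the deficit can be sharpened to $\eta(S,T) \leq -2$.

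I would first dispose of the case $T = \emptyset$. Here $q(S,\emptyset) \leq \omega(G-S)$, so the Tutte inequality gives $k|S| < \omega(G-S)$. If $\omega(G-S) > 1$, $k$-toughness yields $|S| \geq k\,\omega(G-S)$, whence $k|S| \geq k^2\omega(G-S) \geq \omega(G-S)$, a contradiction. If $\omega(G-S) \leq 1$, then $S = \emptyset$ and $q(\emptyset,\emptyset) \geq 1$ forces $kn$ to be odd, contradicting the hypothesis (the bound $n \geq k+1$ is used here to ensure $G$ is nonempty enough to carry the argument).

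The substantive case is $T \neq \emptyset$, where the goal is to absorb $T$ into the toughness set. Each vertex $v \in T$ contributes to $\sum_{v \in T} d_{G-S}(v)$ via edges to $T \setminus \{v\}$ and via edges to the components of $G - S - T$; double-counting gives $\sum_{v \in T} d_{G-S}(v) \leq (|T|-1)|T| + \sum_C e_G(V(C),T)$, but more usefully one bounds it below by the edges to components, which is at least $q(S,T)$ minus the number of parity-even components contributing no edge. If $\omega(G - (S \cup T)) \geq 2$, applying $k$-toughness to $S \cup T$ gives $|S|+|T| \geq k\,\omega(G-(S\cup T)) \geq k\,q(S,T)$, and combining with the degree estimate yields $\eta(S,T) \geq 0$, contradicting $\eta(S,T) \leq -2$.

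The main obstacle is precisely this bookkeeping: one must simultaneously control $\sum_{v \in T}d_{G-S}(v)$, $q(S,T)$, and $\omega(G-(S \cup T))$ tightly enough that the toughness bound $|S|+|T| \geq k\omega(G-(S\cup T))$ closes the gap. The delicate subcases are when $\omega(G-(S \cup T)) \leq 1$ (handled by absorbing the residual vertices into $S$ and re-running the degree count, using $n \geq k+1$ to avoid triviality) and when many vertices of $T$ have small $(G-S)$-degree (handled by removing the low-degree vertices from $T$, which can only decrease $\eta$, and re-applying the argument after this reduction). Once these reductions are complete, the numerical inequality falls out, furnishing the desired contradiction.
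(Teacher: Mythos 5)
This statement is quoted in the paper as a known result of Enomoto, Jackson, Katerinis and Saito; the paper gives no proof of it, so there is nothing to compare your argument against except the original one, which indeed starts, as you do, from Tutte's $k$-factor criterion. Your handling of the case $T=\emptyset$ is essentially correct. The problem is the case $T\neq\emptyset$, which is where the entire difficulty of the theorem lives, and there your argument has a genuine gap. You claim that $|S|+|T|\geq k\,\omega(G-(S\cup T))\geq k\,q(S,T)$ ``combined with the degree estimate'' yields $\eta(S,T)\geq 0$. But
\[
\eta(S,T)=k|S|-k|T|+\sum_{v\in T}d_{G-S}(v)-q(S,T)
\]
carries the term $-k|T|$, while the toughness bound controls $k|S|+k|T|$; to convert one into the other you would need something like $\sum_{v\in T}d_{G-S}(v)\geq 2k|T|$, which fails exactly when $T$ contains vertices of small degree in $G-S$. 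Those vertices are not a ``delicate subcase'' to be disposed of afterwards --- they are the reason the theorem is hard, and the inequality you assert simply does not close without addressing them first.

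Your proposed remedy for them is also wrong in sign: a vertex $v\in T$ with $d_{G-S}(v)<k$ contributes the negative quantity $d_{G-S}(v)-k$ to $\eta$, so deleting it from $T$ tends to \emph{increase} $\eta$ (i.e., weaken the violation), not decrease it as you claim; and the effect on $q(S,T)$ of such a deletion is not controlled in your sketch. The actual proof requires an additional idea that is absent here: one stratifies $T$ according to the degrees $d_{G-S}(v)$, applies the toughness hypothesis not to $S\cup T$ but to an auxiliary cut set built from $S$ together with neighborhoods of the low-degree stratum of $T$, and balances $\sum_{v\in T}\left(k-d_{G-S}(v)\right)$ against the number of components this auxiliary deletion creates. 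Without that construction the ``numerical inequality'' you defer to bookkeeping does not fall out, so the proposal as written does not constitute a proof.
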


Partial results related to Conjecture \ref{cnj:Chvatal} have been obtained in various restricted classes of graphs.
For details of known results on Conjecture \ref{cnj:Chvatal}, we refer the reader to the survey \cite{bib:Bauer(survey)}. 
In this paper, we deal with Conjecture \ref{cnj:Chvatal} in $2K_2$-free graphs. 
A graph is {\it $2K_2$-free} if it does not contain $2K_2$ as an induced subgraph, where $2K_2$ is the graph consisting of four vertices and two independent edges. 
In 2014, Broersma, Patel and Pyatkin proved the following theorem. 

\begin{thm}[Broersma et al.~\cite{bib:Broersma}]\label{thm:25-tough}
    Every $25$-tough $2K_2$-free graph on at least three vertices is Hamiltonian.
\end{thm}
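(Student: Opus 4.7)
The plan is to combine the $2$-factor theorem of Enomoto et al.\ with a detailed analysis of the "cross-edges" forced between cycles by the $2K_2$-free hypothesis. Since $G$ is $25$-tough it is certainly $2$-tough, so when $|V(G)|$ is even Enomoto et al.'s theorem supplies a $2$-factor $F$; when $|V(G)|$ is odd one first removes a vertex $v$, applies the theorem to $G-v$, and re-inserts $v$ into the resulting cycle structure using the $2K_2$-free condition to guarantee the needed local edges.

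Given a $2$-factor $F = C_1 \cup \cdots \cup C_k$, I would choose $F$ so as to minimize $k$ first and, subject to that, minimize $|V(C_1)|$ for a shortest cycle $C_1$. Assume for contradiction $k \ge 2$. The $2K_2$-free hypothesis then says that for every edge $uv \in E(C_1)$ and every edge $xy \in E(C_j)$ with $j \ge 2$, at least one edge joins $\{u,v\}$ to $\{x,y\}$. The next step is to feed these cross-edges into a rotation-and-exchange argument: given a cross-edge $ux$ together with the $F$-neighbors of $u$ and $x$, attempt to delete two cycle-edges and add two non-cycle edges so as to merge $C_1$ with $C_j$ into one cycle, producing a $2$-factor with strictly fewer components and contradicting the choice of $F$.

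If every merging attempt fails, the $2K_2$-free condition forces a very restrictive local picture: on each cycle $C_j$ the vertices with "exchange-compatible" neighbors in $V(C_1)$ must be few, and the remaining vertices of $C_j$ behave like an independent module with respect to $C_1$. Bundling all such obstructing vertices across the cycles $C_2,\dots,C_k$ into a single set $S$, I would verify that $G - S$ splits into at least $k$ components (essentially one per cycle plus the fragments of $C_1$), while $|S|$ is controlled by a small constant times $k$; if the ratio $|S|/\omega(G-S)$ can be pushed below $25$, this contradicts $25$-toughness.

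The main obstacle is this final quantitative step. Executing the rotation scheme yields clean contradictions only when enough cross-edges are simultaneously available, so the hard work is in characterizing the stubborn configurations in which merging is blocked and showing that each such configuration contributes at most a bounded number of vertices to $S$. The constant $25$ presumably emerges as the worst-case ratio in this bookkeeping; any sharper analysis of how a $2K_2$-free graph can resist cycle-merging should eventually bring the bound down, foreshadowing the improvements by Shan and by the present paper.
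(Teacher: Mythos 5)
This theorem is not proved in the paper at all --- it is quoted from Broersma, Patel and Pyatkin \cite{bib:Broersma} as known background, and the paper instead proves the stronger Theorem \ref{thm:main2} (2-tough suffices). So the only fair comparison is between your outline and the genre of argument actually used in \cite{bib:Broersma}, \cite{bib:Shan} and Section 3 of this paper. At that level of generality your plan is the right one: take a $2$-factor with the minimum number of components, use the $2K_2$-free condition to force cross-edges between any two edges lying on distinct cycles, try to merge cycles by exchanges, and when merging is blocked assemble a vertex cut $S$ that violates the toughness hypothesis. That is indeed the skeleton of every proof in this line of work.

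However, as a proof your proposal has a genuine gap, and it is exactly where you say it is: the ``final quantitative step'' is the entire theorem, and you have not executed it. You never specify what the obstructing set $S$ is, never prove that $G-S$ actually has many components (your parenthetical ``essentially one per cycle plus the fragments of $C_1$'' is not justified and is not what happens in the real arguments --- in this paper's Case 1, for instance, the components of $G-W$ are isolated vertices drawn from a carefully constructed co-absorbable set $U^\infty$ together with $B_0$, not one component per cycle), and never carry out the count showing $|S|/\omega(G-S) < 25$. The phrase ``if the ratio can be pushed below $25$'' concedes that the bound is unverified. The rotation--exchange analysis that classifies the ``stubborn configurations'' is precisely the content of Claims \ref{edge_lemma}--\ref{|U^infty|} in this paper (and of the corresponding lemmas in \cite{bib:Broersma}), and none of that structure --- $A$-type versus $B$-type vertices, co-absorbable vertices and their independence, the alternating structure of the other cycles relative to a $B$-type edge --- appears in your sketch. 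A secondary but real error: your parity detour for odd $|V(G)|$ is unnecessary and suggests a misreading of Enomoto et al.; for $k=2$ the quantity $kn=2n$ is always even, so every $2$-tough graph on at least $3$ vertices has a $2$-factor directly, with no vertex deletion needed.
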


Recently, Shan improved the required toughness in Theorem \ref{thm:25-tough} from 25 to 3. 

\begin{thm}[Shan \cite{bib:Shan}]\label{thm:Shan}
    Every $3$-tough $2K_2$-free graph on at least three vertices is Hamiltonian.
\end{thm}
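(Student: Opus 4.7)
My plan is to start from a 2-factor of $G$: since $G$ is 3-tough on at least three vertices, the Enomoto--Jackson--Katerinis--Saito theorem supplies one. Among all 2-factors of $G$, I would choose $F = C_1 \cup \cdots \cup C_c$ minimising the number of components $c$ and, subject to that, minimising some secondary extremal parameter such as the number of shortest cycles. If $c = 1$, then $F$ is already a Hamiltonian cycle, so assume $c \geq 2$ and aim for a contradiction.

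The engine of the argument is the $2K_2$-free condition applied to disjoint edges $uu' \in E(C_i)$ and $vv' \in E(C_j)$ with $i \neq j$: at least one of the four possible cross-edges must be present in $G$, so the bipartite adjacency between any two cycles is dense. At the same time, the minimality of $c$ forbids the obvious merging swap: if $uu'$ and $vv'$ can be oriented so that both $uv$ and $u'v'$ are edges of $G$, then deleting $uu', vv'$ and adding $uv, u'v'$ fuses $C_i$ and $C_j$ into one cycle, contradicting the choice of $F$. Longer alternating rearrangements that would reduce $c$ (or preserve $c$ while lowering the secondary parameter) are excluded in the same spirit. Combining the abundance of cross-edges with these forbidden swap patterns pins down the adjacency between any two cycles very tightly. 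I would also apply $2K_2$-freeness to pairs of disjoint edges lying inside a single cycle, which forces each $C_i$ to carry a strong internal dominating structure.

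The principal obstacle is to convert this rigidity into a set $S \subseteq V(G)$ with $|S|/\omega(G-S) < 3$, contradicting $3$-toughness. I would first reduce to the case $c = 2$ by showing that with three or more cycles a short alternating rearrangement always merges two of them. In the two-cycle case, I would classify the bipartite patterns between $C_1$ and $C_2$ that survive all the forbidden-swap constraints, and, in each surviving type, construct $S$ by selecting a small set of ``dominating'' vertices on each cycle whose removal shatters the remainder into many small components (ideally singletons or short paths). The delicate point is the amortised counting: on average each vertex placed in $S$ must pay for strictly more than three new components of $G - S$, and achieving this ratio almost certainly requires using not only the cross-structure between $C_1$ and $C_2$ but also the internal dominating structure of each cycle produced by $2K_2$-freeness. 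This counting step is where I expect the main difficulty to lie, and it is the place where any further sharpening (toward toughness $2$, as in the paper's main result) would have to come from.
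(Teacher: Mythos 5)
Your opening moves match the paper's: take a $2$-factor $F$ with the minimum number of components (toughness already guarantees one exists by the Enomoto--Jackson--Katerinis--Saito theorem), observe that between any two disjoint edges on different cycles $2K_2$-freeness forces a cross-edge, and note that minimality forbids the swap that would merge two cycles (this is exactly Claim 3.1 of the paper). But the proposal has a genuine gap at the step you lean on hardest: the reduction to $c=2$. It is not true that with three or more cycles some short alternating rearrangement always merges two of them; the hard configurations are precisely $2$-factors with many components in which no such rearrangement exists. What the paper (following Shan) actually bounds by two is not the number of cycles but the number of cycles containing a \emph{$B$-type edge}, i.e.\ an edge neither of whose ends is adjacent to two consecutive vertices of another cycle; all remaining cycles are ``$AB$-alternating,'' and there may be arbitrarily many of them. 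The final cutset is built across this entire structure, not just between two cycles.

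The second, related gap is that you have no mechanism for the ``amortised counting'' you correctly identify as the crux. The paper's mechanism is the notion of a \emph{co-absorbable} vertex $x$ (one for which $G-x$ has a $2$-factor with fewer components than $F$): a rotation argument shows $d_G(x)\le\alpha(G)-1$ for such $x$, toughness gives $\alpha(G)\le n/3$, and a $2K_2$-free lemma shows that the vertices of degree at most $(n-\alpha(G))/2$ form an independent set --- hence the co-absorbable vertices are independent. This single fact drives everything: it makes the successors of $A$-type vertices an independent set (so $|A|\le n/3$), forces some cycle to contain a $B$-type edge and to be long, and ultimately produces either an independent set larger than $n/3$ or a cutset $W$ with $|W|/\omega(G-W)<2$ (a fortiori $<3$). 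Without co-absorbability, or some substitute for it, the ``dominating vertices whose removal shatters the rest'' that you hope to select cannot be certified to exist, and the forbidden-swap constraints alone do not pin down the adjacency tightly enough to carry out the count.
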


The class of $2K_2$-free graphs is a superclass of split graphs. 
A graph $G$ is called a {\it split graph} if the vertex set of $G$ can be partitioned into a clique and an independent set. 
In 1996, Kratsch, Lehel and M\"{u}ller showed the following theorem. 

\begin{thm}[Kratsch et al.~\cite{bib:Kratsch}]\label{thm:split graph}
    Every $\frac{3}{2}$-tough split graph on at least three vertices is Hamiltonian.
\end{thm}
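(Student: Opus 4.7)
The plan is to fix a split partition $V(G) = K \cup I$ with $K$ a clique and $I$ independent, and to reduce Hamiltonicity to a block-partition problem solvable from the $\frac{3}{2}$-toughness. The case $|I| \le 1$ is trivial, since then $G$ is a clique with at most one extra vertex and $\delta(G) \ge 2$ suffices; so assume $|I| \ge 2$. From the toughness, two inequalities stand out: with $S = K$ we get $|K| \ge \frac{3}{2}|I|$ (as $G - K$ has $|I| \ge 2$ isolated components), and for any $I' \subseteq I$ with $|I'| \ge 2$, applying the toughness condition to $S = N_G(I') \cap K$ (which isolates each vertex of $I'$ in $G - S$) yields $|N_G(I') \cap K| \ge \frac{3}{2}|I'|$.

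Next, I would reformulate Hamiltonicity. Any Hamiltonian cycle corresponds to a cyclic ordering $v_1, \dots, v_s$ of $I$ (with $s = |I|$) together with a partition of $K$ into nonempty ``blocks'' $K_1, \dots, K_s$, where $K_j$ is traversed as a path from a first vertex $a_j \in N_G(v_j)$ to a last vertex $b_j \in N_G(v_{j+1})$, with $a_j = b_j$ exactly when $|K_j| = 1$. Since $K$ is a clique, the only real constraints are on these endpoints; so the goal is to produce such an ordering and block partition. The natural tool is a Hall-type bipartite argument on $B = G[I, K]$. A direct ``2-to-1 matching'' (two distinct $K$-neighbors per $v \in I$) would require the stronger bound $|N_G(I') \cap K| \ge 2|I'|$; the gap must be absorbed by size-$1$ blocks, each a single $K$-vertex in $N_G(v_j) \cap N_G(v_{j+1})$ serving both endpoints at once. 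The bookkeeping $m_1 + 2m_2 = 2|I|$ and $|K| = m_0 + m_1 + m_2$ (where $m_2, m_1, m_0$ count, respectively, size-$1$ blocks, endpoint-vertices of larger blocks, and interior block-vertices) shows that exactly this trade-off is permitted by the slack $|K| - |I| \ge \frac{1}{2}|I|$.

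I expect the main obstacle to split into two parts. First, verifying a ``mixed'' Hall condition that simultaneously picks distinct endpoints where possible and common neighbors where forced: this should follow from $|N_G(I') \cap K| \ge \frac{3}{2}|I'|$ via a defect-type analysis of tight sets, where an $I'$ with $|N_G(I') \cap K|$ near the $\frac{3}{2}|I'|$ lower bound forces a minimum number of size-$1$ blocks whose ``shared'' vertices can be located inside $N_G(I') \cap K$. Second and more delicate, the resulting interleaved structure must form a single cycle rather than several; if it decomposes into two or more cycles, a swap argument that exchanges endpoints of two blocks on different cycles — using the flexibility of the $K$-clique to reroute freely inside each block — should merge them. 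The $\frac{3}{2}$-toughness, rather than mere $1$-toughness, is what should guarantee that enough swap candidates exist to always perform such a merge; calibrating this threshold is the principal delicate point, and presumably where the tightness example showing $\frac{3}{2}$ cannot be improved enters.
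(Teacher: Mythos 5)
First, a point of reference: the paper you are working from does not prove this statement at all --- Theorem \ref{thm:split graph} is imported verbatim from Kratsch, Lehel and M\"{u}ller \cite{bib:Kratsch} and used as background, so there is no in-paper proof to compare against. Your proposal therefore has to stand on its own, and as it stands it is an outline of a strategy rather than a proof. Your preliminary reductions are fine: the case $|I|\le 1$ is indeed trivial, the inequality $|N_G(I')\cap K|\ge \frac{3}{2}|I'|$ for $|I'|\ge 2$ is a correct application of the toughness definition (every vertex of $I'$ is isolated in $G-(N_G(I')\cap K)$ because $I$ is independent), and the reformulation of a Hamiltonian cycle as a cyclic order on $I$ together with a partition of $K$ into nonempty blocks with prescribed endpoint adjacencies is accurate. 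The counting identity $m_1+2m_2=2|I|$ and the conclusion that the slack $|K|-|I|$ forces at least $2|I|-|K|$ singleton blocks are also correct.

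The genuine gap is that both of the steps you flag as ``the main obstacle'' are exactly where the entire difficulty of the theorem lives, and neither is carried out. (a) The ``mixed Hall condition'' is not a Hall-type statement: you need not merely a system of distinct (or shared) representatives, but a \emph{cyclic ordering} of $I$ in which the forced singleton blocks sit between consecutive $I$-vertices that happen to share a common neighbour. That is a Hamiltonicity-type requirement on an auxiliary structure, and no defect version of Hall's theorem produces it; asserting that tight sets ``force a minimum number of size-$1$ blocks whose shared vertices can be located inside $N_G(I')\cap K$'' does not explain how those shared vertices get threaded consistently around a single cyclic order. (b) The merging step fails precisely in the case your swap argument cannot touch: if a component of the intermediate $2$-factor alternates perfectly between $I$ and $K$ (all its blocks are singletons), there is no internal $K$--$K$ edge to cut, and the standard clique reroute $xy,x'y'\mapsto xy',x'y$ is unavailable; merging such a component with another one requires new $I$--$K$ edges between components together with a nontrivial case analysis, and this is where the toughness bound must actually be invoked quantitatively. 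Saying that $\frac{3}{2}$-toughness ``should guarantee that enough swap candidates exist'' names the difficulty without resolving it. Until (a) and (b) are supplied with actual arguments, the proposal is a plausible roadmap but not a proof.
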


Chv\'{a}tal \cite{bib:Chvatal} constructed a sequence $\{G_l\}_{l=1}^\infty$ of split graphs having no $2$-factors and $t(G_l)=\frac{3l}{2l+1}$ for each positive integer $l$. 
Thus, $\frac{3}{2}$ is the best possible toughness implying split graphs to be Hamiltonian. 
Since every split graph is a $2K_2$-free graph, one cannot decrease the toughness implying $2K_2$-free graphs to be Hamiltonian below $\frac{3}{2}$.

Also, Bauer, Katona, Kratsch and Veldman \cite{bib:Bauer(2-factor)} showed that every $\frac{3}{2}$-tough 5-chordal graph on at least three vertices has a 2-factor, where a graph is called 5-chordal if every chordless cycle of it has length at most five.
Since every $2K_2$-free graph is 5-chordal, we get the following proposition.

\begin{prop}\label{thm:main1}
    Every $\frac{3}{2}$-tough $2K_2$-free graph on at least three vertices has a $2$-factor.
\end{prop}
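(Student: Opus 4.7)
The plan is to observe that Proposition~\ref{thm:main1} follows almost immediately from the theorem of Bauer, Katona, Kratsch and Veldman cited in the paragraph just above the statement. Since that theorem guarantees a $2$-factor in every $\frac{3}{2}$-tough $5$-chordal graph on at least three vertices, it suffices to verify the containment $2K_2$-free $\subseteq$ $5$-chordal. Thus my entire task reduces to showing that a $2K_2$-free graph cannot contain a chordless cycle of length at least six.

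To establish this containment, I would argue by contradiction. Suppose $G$ is $2K_2$-free and contains a chordless cycle $C = v_1 v_2 \cdots v_k v_1$ with $k \geq 6$. Consider the two edges $e_1 = v_1 v_2$ and $e_2 = v_4 v_5$. Because $k \geq 6$, the four vertices $v_1, v_2, v_4, v_5$ are pairwise distinct, and none of the four pairs $\{v_1,v_4\}, \{v_1,v_5\}, \{v_2,v_4\}, \{v_2,v_5\}$ is consecutive along $C$. Since $C$ is chordless, none of these pairs is an edge of $G$. Consequently the subgraph of $G$ induced on $\{v_1,v_2,v_4,v_5\}$ consists of exactly the two edges $e_1$ and $e_2$, which is a copy of $2K_2$, contradicting the hypothesis that $G$ is $2K_2$-free.

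Once the containment is in hand, applying the Bauer--Katona--Kratsch--Veldman theorem to $G$ delivers a $2$-factor, finishing the proof. There is no real obstacle here: the substantive difficulty in extracting a $2$-factor from a toughness hypothesis is absorbed by the cited theorem, and the only new ingredient needed is the elementary structural observation above. The proposition is therefore best viewed as a convenient reformulation of the Bauer--Katona--Kratsch--Veldman result in the $2K_2$-free setting, to be used later as a stepping stone toward the main Hamiltonicity theorem.
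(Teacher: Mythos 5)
Your proposal is correct and is exactly the paper's argument: the paper derives Proposition~\ref{thm:main1} by citing the Bauer--Katona--Kratsch--Veldman theorem together with the (unproved there) observation that every $2K_2$-free graph is $5$-chordal, and your verification of that observation via the edges $v_1v_2$ and $v_4v_5$ of a chordless cycle of length at least six is valid.
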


Gao and Pasechnik \cite{bib:Mou} conjectured that every 2-tough $2K_2$-free graph on at least three vertices is Hamiltonian. 
Theorem \ref{thm:Shan}, due to Shan \cite{bib:Shan}, is a partial solution to this conjecture.
In this paper, we prove this conjecture by modifying the proof in \cite{bib:Shan}.

\begin{thm}\label{thm:main2}
    Every $2$-tough $2K_2$-free graph on at least three vertices is Hamiltonian.
\end{thm}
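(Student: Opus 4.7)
The plan is to argue by contradiction. Assume $G$ is a $2$-tough $2K_2$-free graph on at least three vertices that is not Hamiltonian. By Proposition \ref{thm:main1}, $G$ has a $2$-factor, so one may choose a $2$-factor $F$ of $G$ with the minimum possible number of components $k$, and subject to that, satisfying some secondary extremal condition (e.g.\ maximizing the length of a longest cycle in $F$, or maximizing the total number of edges of $G$ having both endpoints in one fixed component). Write the components of $F$ as $C_1, \ldots, C_k$. Non-Hamiltonicity forces $k \ge 2$, and the whole argument aims to derive a contradiction from this.

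The structural engine is the $2K_2$-free hypothesis: for any edge $e_i = x_1 x_2 \in E(C_i)$ and any edge $e_j = y_1 y_2 \in E(C_j)$ with $i \ne j$, at least one of the four edges $x_a y_b$ lies in $G$, since otherwise $\{e_i, e_j\}$ would induce a $2K_2$. I would first exploit this to show that between any pair of cycles of $F$ there are many chords, and then look for a \emph{switching pattern}: chords $uv$ and $u'v'$ with $uu' \in E(C_i)$ and $vv' \in E(C_j)$. Such a pattern allows deleting $uu'$ and $vv'$ and inserting $uv$ and $u'v'$, merging $C_i$ and $C_j$ into a single cycle on $V(C_i) \cup V(C_j)$ and yielding a $2$-factor with $k-1$ components --- contradicting the minimality of $F$. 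Forcing such a switching pattern to exist is therefore the main structural task.

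If no switching pattern exists between some pair $(C_i, C_j)$, the chord structure between them is severely constrained, and I would extract from this rigidity a small \emph{obstruction set} of vertices on which the crossing chords are forced to concentrate (typically consisting of the few high-degree vertices through which $2K_2$-freeness routes all the required connections). Combining these obstruction sets across all pairs of components, together with carefully chosen vertices on the cycles themselves, should produce a single set $S \subset V(G)$ whose removal leaves strictly more than $|S|/2$ components, violating the $2$-tough inequality $|S| \ge 2\,\omega(G - S)$.

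The main obstacle is the tightness of the toughness constant. Under the hypothesis of $3$-toughness used in \cite{bib:Shan}, there is room for slack when building the contradictory cutset. To push the threshold down to the optimal value $2$ conjectured by Gao and Pasechnik, every step of the case analysis has to be essentially sharp: the choice of $F$ may need to satisfy several simultaneous extremal conditions, the analysis of forbidden chord patterns must leave no avoidable wiggle room, and the counting of components in $G - S$ cannot afford any wasted unit. This is presumably where the ``modification of the proof in \cite{bib:Shan}'' advertised by the authors really pays off, replacing generous estimates with tight structural lemmas and possibly handling a few small exceptional configurations by hand.
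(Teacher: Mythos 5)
Your outline correctly identifies the broad strategy shared by Shan's proof and this paper: take a $2$-factor $F$ with the minimum number of components (which exists since $2$-tough implies $\frac{3}{2}$-tough), use $2K_2$-freeness to force chords between cycles of $F$, merge cycles when a suitable chord pattern appears, and otherwise build a cutset violating the toughness bound. But everything past that point is a placeholder rather than an argument, and the placeholders are exactly where the proof lives. You never specify what the ``obstruction set'' is, how it is extracted, or why removing it disconnects enough components; the sentences ``should produce a single set $S$'' and ``this is presumably where the modification really pays off'' concede that the construction is missing. As written, the proposal would not let anyone reconstruct a proof, so it has a genuine gap: the entire quantitative core.

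Concretely, the paper's argument is driven by three ideas that are absent from your sketch. First, the notion of a \emph{co-absorbable} vertex $x$ (one admitting a $2$-factor of $G-x$ with fewer components than $F$): such a vertex has degree at most $\alpha(G)-1$, and since $2$-toughness gives $\alpha(G) \le n/3$, a degree lemma for $2K_2$-free graphs forces the set of all co-absorbable vertices to be \emph{independent}. This independence is the engine behind almost every contradiction. Second, the classification of vertices into $A$-type and $B$-type, the fact that the successors of $A$-type vertices are co-absorbable (hence $|A| \le n/3$), and the consequence that at most two cycles of $F$ fail to be $AB$-alternating --- this is what reduces the problem to two tractable cases rather than an unstructured family of ``pairs with no switching pattern.'' Third, in the main case, an iteratively defined set $U_x^\infty$ of co-absorbable vertices on the exceptional cycle $C$, together with the bound $|V_C(U^\infty)| \le 2|U^\infty|$; it is precisely this factor of $2$ that makes the final ratio $|W|/\omega(G-W) < 2$ and brings the toughness threshold down from $3$ to $2$. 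Without supplying these (or equivalent) constructions, your proposal describes the destination but not the route.
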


It is unknown whether the toughness condition of Theorem \ref{thm:main2} is sharp. 
Considering Proposition \ref{thm:main1}, we conjecture that the following problem has a positive solution.

\begin{prob}
    Is every $\frac{3}{2}$-tough $2K_2$-free graph on at least three vertices Hamiltonian?
\end{prob}

\section{Terminology}\label{sec:term}

In this paper, we consider only undirected, finite and simple graph. The terminology not defined here can be found in \cite{bib:Diestel}.

Let $G$ be a graph and let $x \in V(G)$. 
The set of neighbors of $x$ in $G$ is denoted by $N_G(x)$ and the degree of $x$ in $G$ is denoted by $d_G(x)$. 
Let $S \subset V(G)$. 
We define the neighbors of $S$ by $N_G(S)=\bigcup_{x \in S}N_G(x)$. 
Let $H$ be a subgraph of $G$. 
Then we define that $V_H(x)=N_G(x) \cap V(H)$, $\overline{V}_H(x)=V(H) \setminus V_H(x)$, $V_H(S)=N_G(S) \cap V(H)$ and $\overline{V}_H(S)=V(H) \setminus V_H(S)$. 
For $xy \in E(G)$, $V_H(xy)$ and $\overline{V}_H(xy)$ denote $V_H(\{x,y\})$ and $\overline{V}_H(\{x,y\})$, respectively. 
Note that $N_H(S)$ is not equal to $V_H(S)$ in general.

Let $V_1, V_2 \subset V(G)$. 
Then $E_G(V_1,V_2)$ is the set of edges of $G$ with one end in $V_1$ and the other end in $V_2$. 
If $V_1=\{x\}$, then we write $E_G(x,V_2)$ for $E_G(\{x\},V_2)$. 
Moreover, for a subgraph $H$ of $G$, we write $E_G(V_1,H)$ and $E_G(H,V_2)$ for $E_G(V_1,V(H))$ and $E_G(V(H),V_2)$, respectively.

Let $P$ be a path. 
If $u$ and $v$ are the end-vertices of $P$, we say that $P$ is a $uv$-path. 
For $x,y \in V(P)$, $xPy$ denotes the path between $x$ and $y$ passing through $P$. 
Let $P$ be an $xy$-path and let $Q$ be a $uv$-path. 
If $P$ and $Q$ are disjoint, then $xPyuQv$ denotes the path between $x$ and $v$ passing through $P$, an added edge $yu$ and $Q$.

In this paper, we always assume that each cycle in a graph has a fixed orientation. 
Let $C$ be a cycle. 
For $x \in V(C)$, denote the successor of $x$ by $x^+$ and the predecessor of $x$ by $x^-$. 
Let $S \subset V(C)$. 
Then $S^+=\{x^+ \in V(C) \mid x \in S\}$ and $S^-$ is defined similarly. 
Let $D$ be another cycle disjoint from $C$ and $T \subset V(D)$. 
Then $(S \cup T)^+=S^+ \cup T^+$ and $(S \cup T)^-=S^- \cup T^-$. 
For $u,v \in V(C)$, $u\overrightarrow{C}v$ denotes the $uv$-path from $u$ to $v$ along the orientation of $C$. 
Also, $u\overleftarrow{C}v$ denotes $v\overrightarrow{C}u$. 
Let $P$ be an $xy$-path and let $Q$ be a $uv$-path. 
If $P$ and $Q$ are disjoint, then $xPyuQvx$ denotes the cycle passing through $P$, an added edge $yu$, $Q$ and an added edge $vx$.

\section{Proof of Theorem \ref{thm:main2}}
\setcounter{case}{0}

Our proof of Theorem \ref{thm:main2} basically follows the one in Shan \cite{bib:Shan}. 
Shan noted that the property of being 3-tough is used just once for proving one of the claims \cite[Claim 2.5]{bib:Shan}. 
Here we also improve the argument by introducing some new claims.

First, we introduce the following observation which immediately follows from the definition of $2K_2$-free graphs. 

\begin{obs}\label{prop:2K_2-free}
    For a graph $G$, the following statements are equivalent.
    \begin{enumerate}[$(1)$]
        \item $G$ is a $2K_2$-free graph.
        \item For every edge $xy \in E(G)$, the set $\overline{V}_G(xy)=V(G) \setminus (N_G(x) \cup N_G(y))$ is independent in $G$.
    \end{enumerate}
\end{obs}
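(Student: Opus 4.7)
The statement is an immediate unpacking of definitions, so my plan is to verify the two implications separately, each by contraposition.

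For the direction $(1) \Rightarrow (2)$, I would start from an edge $xy \in E(G)$ and suppose, toward a contradiction, that $\overline{V}_G(xy)$ is not independent. That gives two vertices $u,v \in \overline{V}_G(xy)$ with $uv \in E(G)$. By definition of $\overline{V}_G(xy)$, neither $u$ nor $v$ lies in $N_G(x) \cup N_G(y)$, so in particular $\{u,v\} \cap \{x,y\} = \emptyset$ (since $x \in N_G(y)$ and $y \in N_G(x)$) and there is no edge between $\{x,y\}$ and $\{u,v\}$. Hence $G[\{x,y,u,v\}]$ has exactly the two edges $xy$ and $uv$, i.e., it is an induced $2K_2$, contradicting (1).

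For the direction $(2) \Rightarrow (1)$, I would assume $G$ contains an induced $2K_2$ on vertices $\{x,y,u,v\}$ with edges $xy$ and $uv$. Since the $2K_2$ is induced, there are no edges between $\{x,y\}$ and $\{u,v\}$, so $u,v \notin N_G(x) \cup N_G(y)$, which places $u,v$ in $\overline{V}_G(xy)$. But $uv \in E(G)$, so $\overline{V}_G(xy)$ is not independent, contradicting (2).

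There is no real obstacle here; the whole content is recognizing that an induced $2K_2$ containing the edge $xy$ is the same datum as an edge of $G$ lying inside $\overline{V}_G(xy)$. The only small care needed is to remember that $\overline{V}_G(xy) = V(G) \setminus (N_G(x) \cup N_G(y))$ automatically excludes $x$ and $y$ themselves, so the four vertices are genuinely distinct.
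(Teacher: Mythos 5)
Your proof is correct and is exactly the routine unpacking of definitions that the paper has in mind when it states this as an observation ``which immediately follows from the definition'' and omits the proof. Both directions are handled properly, including the small but necessary point that $x,y \notin \overline{V}_G(xy)$ because $x \in N_G(y)$ and $y \in N_G(x)$, so the four vertices are distinct.
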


For a graph $G$, let $\alpha(G)$ be the independence number of $G$. 
We show the following property of $2K_2$-free graphs.

\begin{lem}\label{lem:property of 2K_2-free}
    Let $G$ be a $2K_2$-free graph on $n$ vertices. 
    Then the set 
    \[\left\{x \in V(G) \mid d_G(x) \leq \frac{n-\alpha(G)}{2}\right\}\]
    is independent in $G$.
\end{lem}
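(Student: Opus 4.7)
The plan is to argue by contradiction. Suppose the set
\[S=\left\{x\in V(G)\mid d_G(x)\le \tfrac{n-\alpha(G)}{2}\right\}\]
is not independent; then there exists an edge $xy\in E(G)$ with $x,y\in S$. The entire argument will revolve around the set $\overline{V}_G(xy)=V(G)\setminus(N_G(x)\cup N_G(y))$.

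First I would lower-bound $|\overline{V}_G(xy)|$. The assumption $x,y\in S$ yields $d_G(x)+d_G(y)\le n-\alpha(G)$, hence
\[|\overline{V}_G(xy)|=n-|N_G(x)\cup N_G(y)|\ge n-\bigl(d_G(x)+d_G(y)\bigr)\ge \alpha(G).\]

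Next I would invoke Observation \ref{prop:2K_2-free}: since $G$ is $2K_2$-free and $xy\in E(G)$, the set $\overline{V}_G(xy)$ is independent in $G$. To finish, I would extend this independent set by one extra vertex. Because $xy\in E(G)$, we have $x\in N_G(y)\subseteq N_G(x)\cup N_G(y)$, so $x\notin \overline{V}_G(xy)$; moreover no vertex of $\overline{V}_G(xy)$ is a neighbor of $x$, since by definition $\overline{V}_G(xy)\cap N_G(x)=\emptyset$. Thus $\overline{V}_G(xy)\cup\{x\}$ is an independent set of size at least $\alpha(G)+1$, contradicting the definition of $\alpha(G)$.

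I do not anticipate any real obstacle: the proof boils down to Observation \ref{prop:2K_2-free} plus elementary inclusion–exclusion counting. The only point that needs a small verification is that $x$ genuinely lies outside $\overline{V}_G(xy)$, so that appending $x$ strictly enlarges the independent set; this is immediate from $x\in N_G(y)$.
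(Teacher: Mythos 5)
Your proof is correct and follows essentially the same route as the paper: take an edge $xy$ inside the set, apply Observation \ref{prop:2K_2-free} to get that $\overline{V}_G(xy)$ is independent, bound its size by $n-d_G(x)-d_G(y)\ge\alpha(G)$, and adjoin $x$ to exceed $\alpha(G)$. The only (harmless) difference is that you explicitly verify $x\notin\overline{V}_G(xy)$, which the paper leaves implicit.
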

\begin{proof}
    Suppose that there exists an edge $xy \in E(G)$ with $d_G(x),d_G(y) \leq \frac{n-\alpha(G)}{2}$. 
    By Observation \ref{prop:2K_2-free}, the set $\overline{V}_G(xy)$ is independent in $G$. 
    Since any vertex in $\overline{V}_G(xy)$ is not adjacent to $x$, the set $\overline{V}_G(xy) \cup \{x\}$ is independent in $G$. 
    However, since 
    \[|\overline{V}_G(xy)| \geq n-|N_G(x)|-|N_G(y)|=n-d_G(x)-d_G(y) \geq \alpha(G),\]
    we have $\alpha(G) \geq |\overline{V}_G(xy) \cup \{x\}| \geq \alpha(G)+1$, a contradiction.
\end{proof}

Next, we consider a 2-factor $F$ of a graph with minimum number of components. 
A vertex $x \in V(G)$ is said to be {\it co-absorbable} if there exists a 2-factor $F'$ of $G-x$ such that $\omega(F')<\omega(F)$.
Recall that each cycle in a graph has a fixed orientation.

\begin{lem}\label{lem:co-absorbable vtx}
    Let $G$ be a graph containing a $2$-factor, and let $F$ be a fixed $2$-factor of $G$ with minimum number of components. 
    If $x \in V(G)$ is co-absorbable, then $d_G(x) \leq \alpha(G)-1$.
\end{lem}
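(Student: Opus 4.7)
The plan is to produce an independent set of $G$ of size $d_G(x)+1$, which immediately yields $\alpha(G)\geq d_G(x)+1$. Let $N=N_G(x)$. Since $x\notin V(F')$, every $y\in N$ lies on some cycle of $F'$ and has a well-defined successor $y^+$ along that cycle. Set $N^+=\{y^+:y\in N\}$; since the successor map is a bijection on each cycle of a 2-factor, $|N^+|=|N|$. Because $N^+\subseteq V(F')$ and $x\notin V(F')$, the candidate set $I=N^+\cup\{x\}$ has size $d_G(x)+1$, and I will show it is independent.

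Two things must be verified: $x$ is non-adjacent to every vertex of $N^+$, and $N^+$ is pairwise non-adjacent. Both are proved by contradiction, and in every failure case I would exhibit a 2-factor of $G$ with at most $\omega(F')$ components, hence strictly fewer than $\omega(F)$, violating the minimality of $F$. If some $y^+\in N$ with $y\in N$, then $y$ and $y^+$ are consecutive on a common cycle of $F'$, and replacing the edge $yy^+$ by the path $yxy^+$ produces a 2-factor of $G$ with $\omega(F')$ components. If $y_1^+y_2^+\in E(G)$ for distinct $y_1,y_2\in N$ lying on different cycles $C_1,C_2$ of $F'$, then the cycle $xy_1\overleftarrow{C_1}y_1^+y_2^+\overrightarrow{C_2}y_2x$ merges $C_1$, $C_2$, and $x$, producing a 2-factor of $G$ with $\omega(F')-1$ components. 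If instead $y_1,y_2$ lie on a common cycle $C$, the cycle $xy_1\overleftarrow{C}y_2^+y_1^+\overrightarrow{C}y_2x$ absorbs $x$ into $C$ via the chord $y_1^+y_2^+$, again giving a 2-factor of $G$ with $\omega(F')$ components.

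The main obstacle is the same-cycle case: one must check that the two arcs used in the construction, namely the one from $y_1$ backward to $y_2^+$ and the one from $y_1^+$ forward to $y_2$, partition $V(C)$ so that the displayed cycle is a spanning cycle of $V(C)\cup\{x\}$. This can be seen directly by labelling $C=v_1v_2\cdots v_kv_1$ with $y_1=v_i$ and $y_2=v_j$ ($i<j$) and noting that the first arc covers $\{v_1,\dots,v_i\}\cup\{v_{j+1},\dots,v_k\}$ while the second covers $\{v_{i+1},\dots,v_j\}$. Once this is handled, the rest is bookkeeping, and the conclusion $\alpha(G)\geq |I|=d_G(x)+1$ follows at once.
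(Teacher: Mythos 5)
Your proposal is correct and follows the same route as the paper: both form $N_G(x)^+\cup\{x\}$ and derive independence from the minimality of $\omega(F)$. The only difference is that the paper compresses the case analysis into ``we can easily find a 2-factor $\tilde{F}$ with $\omega(\tilde F)\le\omega(F')$,'' whereas you write out the three insertion/merging constructions explicitly; these are exactly the intended constructions.
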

\begin{proof}
    Let $x \in V(G)$ be co-absorbable. 
    Then there exists a 2-factor $F'$ of $G-x$ such that $\omega(F')<\omega(F)$. 
    For each $y \in V(G) \setminus \{x\}$, in this proof, $y^+$ denotes the successor of $y$ in the unique cycle of $F'$ containing $y$. 
    Let $N_G(x)^+=\{y^+ \mid y \in N_G(x)\}$. 
    If $N_G(x)^+ \cup \{x\}$ is not independent, then we can easily find a 2-factor $\tilde{F}$ of $G$ such that $\omega(\tilde{F}) \leq \omega(F')<\omega(F)$, which contradicts the minimality of $\omega(F)$. 
    Thus, the set $N_G(x)^+ \cup \{x\}$ is independent in $G$. 
    This implies $d_G(x)=|N_G(x)^+| \leq \alpha(G)-1$. 
\end{proof}

\begin{prf3}

    Let $G$ be a 2-tough $2K_2$-free graph with at least three vertices. 
    Let $n=|V(G)|$. 
    Since the graph $G$ is 2-tough, $\alpha(G) \leq \frac{n}{3}$ and $G$ has a 2-factor. 
    We take a 2-factor $F$ of $G$ with minimum number of components. 
    Let $\mathcal{F}$ be the set of all cycles of $F$. 
    If $|\mathcal{F}|=1$, then $F$ is a Hamiltonian cycle of $G$. 
    We so assume that $|\mathcal{F}|>1$. 
    For $x \in V(G)$, in the following, $x^+$ and $x^-$ denotes the successor of $x$ and the predecessor of $x$ in the unique cycle of $F$ containing $x$, respectively. The following claim can be easily shown by the minimality of $|\mathcal{F}|$.

    \begin{cla}\label{edge_lemma}
        Let $C,D \in \mathcal{F}$ be two distinct cycles. 
        If $x \in V(C)$ and $y \in V(D)$ are adjacent in $G$, then $E_G(\{x^-,x^+\},\{y^-,y^+\})=\emptyset$.
    \end{cla}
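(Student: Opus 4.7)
The plan is to argue by contradiction. Suppose that some edge $e \in E_G(\{x^-,x^+\},\{y^-,y^+\})$ exists in addition to $xy$. I would use $xy$ together with $e$ as two ``crossbars'' between $C$ and $D$ to splice the two cycles into a single cycle on $V(C) \cup V(D)$, leaving every other cycle of $\mathcal{F}$ untouched. The result is a new 2-factor of $G$ with $|\mathcal{F}|-1$ components, contradicting the minimality of $|\mathcal{F}|$ in the choice of $F$.

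Concretely, by symmetry it suffices to check one representative case, say $x^+y^+ \in E(G)$; the other three configurations $x^+y^-$, $x^-y^+$, $x^-y^-$ are handled in the same way after reversing the orientation of $C$ or $D$. In the chosen case I would form the cycle
\[
	x \,\overleftarrow{C}\, x^{+} \, y^{+} \,\overleftarrow{D}\, y \, x,
\]
that is, starting at $x$ traverse $C$ backwards to reach $x^+$, cross to $y^+$ along the edge $x^+y^+$, traverse $D$ backwards to reach $y$, and return to $x$ along $xy$. A direct inspection shows that every vertex of $V(C) \cup V(D)$ appears exactly once, so this is a cycle spanning $V(C) \cup V(D)$.

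Replacing $C$ and $D$ in $F$ by this single merged cycle (and keeping all the other cycles of $\mathcal{F}$ as they are) produces a spanning 2-regular subgraph $\tilde F$ of $G$ with $\omega(\tilde F) = |\mathcal F| - 1 < |\mathcal F|$. This contradicts the minimality of $|\mathcal F|$, and hence no such edge can exist.

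There is essentially no obstacle here beyond the bookkeeping: the only thing to be careful about is the orientation, making sure that in each of the four cases $x^\pm y^\pm$ one traverses $C$ and $D$ in directions that are consistent with the two crossbars $xy$ and the second edge, so that the concatenation actually yields a single cycle rather than two disjoint paths. Since the four cases are related by reversing the orientation of either $C$ or $D$, writing out the case $x^+y^+$ in detail suffices.
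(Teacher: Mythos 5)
Your approach is the intended one: the paper states this claim without proof, remarking only that it ``can be easily shown by the minimality of $|\mathcal{F}|$,'' and the cycle-merging argument you describe is exactly that argument. One correction to your displayed cycle, though: in the paper's notation $y^+\overleftarrow{D}y$ is the path traversed \emph{against} the orientation of $D$, so starting at $y^+$ its very next (and final) vertex is $y$ --- as written, your cycle $x\overleftarrow{C}x^+y^+\overleftarrow{D}yx$ covers only $V(C)\cup\{y,y^+\}$ and so does not yield a $2$-factor with fewer components. The arc through $D$ must follow the orientation: the merged cycle is $x\overleftarrow{C}x^+y^+\overrightarrow{D}yx$, or equivalently $xy\overleftarrow{D}y^+x^+\overrightarrow{C}x$, which is the form the paper itself uses for the analogous constructions in later claims; with that one arrow flipped, your argument is complete.
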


    Let $x \in V(G)$ and let $C \in \mathcal{F}$ be the unique cycle such that $x \in V(C)$. 
    If there exists a cycle $D \in \mathcal{F} \setminus \{C\}$ such that $x$ is adjacent to two consecutive vertices on $D$ in $G$, we say that $x$ is of {\it $A$-type} (w.r.t.\ $D$). 
    If $x$ is not of $A$-type (w.r.t.\ any cycle in $\mathcal{F} \setminus \{C\}$), we say that $x$ is of {\it $B$-type}. 
    We define
    \[A=\{x \in V(G) \mid x \text{ is of } A \text{-type}\} \text{ and } B=V(G) \setminus A.\]
    For $I \subset V(G)$, we say that $I$ is {\it co-absorbable} if every vertex in $I$ is co-absorbable.

    \begin{cla}\label{co-abso}
        If $I \subset V(G)$ is co-absorbable, then $I$ is independent in $G$.
    \end{cla}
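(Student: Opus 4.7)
Argue by contradiction: suppose $x, y \in I$ with $xy \in E(G)$. Since $x$ is co-absorbable, fix a 2-factor $F'$ of $G - x$ with $\omega(F') < \omega(F)$, and write $y^+, y^-$ for the successor and predecessor of $y$ in the unique cycle of $F'$ through $y$ (and $C_y$ for that cycle). The goal is to construct a 2-factor $F''$ of $G$ with $\omega(F'') \le \omega(F')$, contradicting the minimality of $F$.

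First, running the argument of Lemma \ref{lem:co-absorbable vtx} in both orientations of $F'$ shows that each of $\{x\} \cup N_G(x)^+_{F'}$ and $\{x\} \cup N_G(x)^-_{F'}$ is independent in $G$. In particular, $xy^+, xy^- \notin E(G)$, and for any $s, t \in N_G(x)$ we have $s^+ t^+ \notin E(G)$ and $s^- t^- \notin E(G)$. Consequently, no single-edge insertion of $x$ into $F'$ along the edge $xy$ is possible.

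The intended construction is a two-edge rerouting: choose $u \in N_G(x) \setminus \{y\}$ lying on a cycle of $F'$ other than $C_y$, delete the edges $yy^+$ and $uu^-$ from $F'$, and add the three edges $xy$, $xu$, and $y^+ u^-$. A direct degree check shows this produces a 2-factor of $G$ whenever $y^+ u^- \in E(G)$; and because $u$ lies on a cycle of $F'$ distinct from $C_y$, the rearrangement merges two cycles while absorbing $x$, so $\omega(F'') = \omega(F') - 1 < \omega(F)$. The symmetric modification with $y^-$ and $u^+$ works in the same way. Hence it suffices to exhibit some $u \in N_G(x) \setminus \{y\}$ on a cycle of $F'$ other than $C_y$ with $y^+ u^- \in E(G)$ or $y^- u^+ \in E(G)$.

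The hard part is establishing the existence of such a $u$. Assuming the contrary, together with Lemma \ref{lem:co-absorbable vtx} (which rules out the ``parallel'' edges $y^+ u^+$ and $y^- u^-$), every $u \in N_G(x) \setminus \{y\}$ on a cycle different from $C_y$ has $\{u^+, u^-\}$ entirely non-adjacent in $G$ to $\{y^+, y^-\}$. To derive a contradiction from this, I would invoke the co-absorbability of $y$ to produce the analogous witness $F_y'$ of $G-y$ and combine it with Observation \ref{prop:2K_2-free} applied to the edge $xy$ (so that $\overline{V}_G(xy)$ is independent). From these ingredients one produces either a 2-factor of $G$ with strictly fewer components than $F$ after all, or an independent set in $G$ of size exceeding $\alpha(G) \le n/3$, contradicting the 2-toughness of $G$. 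The main obstacle is the bookkeeping in this last step: correctly choosing which vertices among $\{y^+,y^-\}$ and the $F'$-successors/predecessors of $N_G(x)$-vertices on other cycles assemble into an independent set that overruns $\alpha(G)$, while simultaneously handling the case where many elements of $N_G(x) \setminus \{y\}$ collapse onto $C_y$ itself.
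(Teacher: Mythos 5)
There is a genuine gap: your proof is not actually completed. The final paragraph, which is where the contradiction must come from, is a description of what you \emph{would} do rather than an argument --- you yourself flag that ``the main obstacle is the bookkeeping in this last step'' and leave unresolved how the successors/predecessors of $N_G(x)$-vertices assemble into an independent set, and how to handle the case where $N_G(x)\setminus\{y\}$ concentrates on $C_y$. As written, the existence of the rerouting vertex $u$ is never established, so the claimed contradiction is never reached. The two-edge rerouting machinery you set up in the second and third paragraphs is also doing no work toward the conclusion.

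The paper's proof shows that none of this construction is needed: the contradiction is a pure independence-number count, using ingredients you already have on the table. By Lemma \ref{lem:co-absorbable vtx}, every co-absorbable vertex satisfies $d_G(x)\le\alpha(G)-1$, and since $2$-toughness gives $\alpha(G)\le n/3$, this is strictly less than $\frac{n-\alpha(G)}{2}$; Lemma \ref{lem:property of 2K_2-free} then says that the set of all vertices of degree at most $\frac{n-\alpha(G)}{2}$ is independent, so $I$ is independent. Concretely, if $x,y\in I$ were adjacent, then $\overline{V}_G(xy)$ is independent by Observation \ref{prop:2K_2-free} and has size at least $n-d_G(x)-d_G(y)\ge n-2\alpha(G)+2\ge\alpha(G)$, so $\overline{V}_G(xy)\cup\{x\}$ is an independent set of size at least $\alpha(G)+1$ --- the contradiction you were reaching for in your last paragraph, obtained without any $2$-factor surgery. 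You should replace the rerouting argument with this counting step.
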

    \begin{proof}
        Let $I \subset V(G)$ be co-absorbable. 
        By Lemma \ref{lem:co-absorbable vtx}, for every $x \in I$, we have 
        \[d_G(x) \leq \alpha(G)-1<\frac{n-\alpha(G)}{2},\] 
        since $\alpha(G) \leq \frac{n}{3}$. 
        Thus, $I$ is independent in $G$ by Lemma \ref{lem:property of 2K_2-free}.
    \end{proof}

    \begin{cla}\label{B-type-co-abso}
        Let $C,D \in \mathcal{F}$ be two distinct cycles, and let $x \in V(C)$ and $y \in V(D)$ be two vertices such that $xy \in E(G)$. 
        If $x$ is of $B$-type and $y$ is co-absorbable, then $xy^+ \notin E(G)$ and $xy^{++} \in E(G)$.
    \end{cla}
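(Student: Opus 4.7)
The first assertion $xy^+\notin E(G)$ is immediate from the definition of $B$-type: since $y$ and $y^+$ are consecutive on $D$ and $xy\in E(G)$, having $xy^+\in E(G)$ would make $x$ of $A$-type w.r.t.\ $D$, contradicting $x\in B$. The same argument applied to $y^-$ also yields $xy^-\notin E(G)$.

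For $xy^{++}\in E(G)$, I would argue by contradiction and suppose $xy^{++}\notin E(G)$. The first step is to use Observation \ref{prop:2K_2-free} to force additional adjacencies. Consider the disjoint edges $xx^+\in E(C)$ and $y^+y^{++}\in E(D)$: of the four possible cross edges between their endpoints, $xy^+$ is excluded by the first assertion, $xy^{++}$ by the contradiction hypothesis, and $x^+y^+$ by Claim \ref{edge_lemma} applied to the cross edge $xy$. Since $G$ is $2K_2$-free, the remaining cross edge must be present, so $x^+y^{++}\in E(G)$. The symmetric analysis with $x^-x$ in place of $xx^+$ yields $x^-y^{++}\in E(G)$.

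The second step is to combine these new edges with the co-absorbability of $y$ to contradict the minimality of $|\mathcal{F}|$. Fix a 2-factor $F^*$ of $G-y$ with $\omega(F^*)<\omega(F)$ and try to reinsert $y$ so as to obtain a 2-factor of $G$ with at most $\omega(F^*)$ components. A single-edge insertion of $y$ is blocked by (the proof of) Lemma \ref{lem:co-absorbable vtx}, which shows that no $F^*$-edge has both endpoints in $N_G(y)$; consequently, the construction must be a multi-edge local swap near $\{x,x^+,y^+,y^{++}\}$ exploiting the newly produced edges $xy$ and $x^+y^{++}$ (and possibly $x^-y^{++}$). This last construction is where I expect the main difficulty: a first attempt that removes $xx^+$, $yy^+$, $y^+y^{++}$ from $F$ and adds $xy$, $x^+y^{++}$ leaves $y^+$ isolated, so one must use the $F^*$-neighbors of $y^+$ (which exist and differ from $y$, since $d_G(y^+)>(n-\alpha(G))/2$ by Lemma \ref{lem:property of 2K_2-free} applied via $yy^+\in E(G)$) to close $y^+$ back into a cycle, and then verify that the resulting spanning $2$-regular subgraph has strictly fewer than $\omega(F)$ components.
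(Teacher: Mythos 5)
Your first step is fine and matches the paper: $xy^+\notin E(G)$ from the $B$-type hypothesis, $x^+y^+\notin E(G)$ from Claim~\ref{edge_lemma}, and then $2K_2$-freeness applied to $xx^+$ and $y^+y^{++}$ forces one of $xy^{++}$, $x^+y^{++}$ to be present. The gap is in how you extract a contradiction from $x^+y^{++}\in E(G)$. The construction you dismiss as a failed ``first attempt'' --- delete $xx^+$, $yy^+$, $y^+y^{++}$ and add $xy$, $x^+y^{++}$ --- is in fact the whole point: it does not leave $y^+$ stranded in a defective $2$-factor of $G$, it produces the single cycle $xy\overleftarrow{D}y^{++}x^+\overrightarrow{C}x$ on $(V(C)\cup V(D))\setminus\{y^+\}$, i.e.\ a $2$-factor of $G-y^+$ with $\omega(F)-1$ components. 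Hence $y^+$ is \emph{co-absorbable}. Since $y$ is co-absorbable by hypothesis and $yy^+\in E(G)$, this contradicts Claim~\ref{co-abso} (co-absorbable vertices form an independent set). No reinsertion of anything into $G$ is needed.

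Your proposed repair --- fixing a $2$-factor $F^*$ of $G-y$ and splicing $y^+$ back in via its $F^*$-neighbours --- does not go through as stated. You have no control over which edges $F^*$ uses, so the new edges $xy$ and $x^+y^{++}$ need not interact with $F^*$ at all, and the insertion trick from Lemma~\ref{lem:co-absorbable vtx} requires an edge of the relevant $2$-factor with both endpoints in the neighbourhood of the vertex being inserted, which the degree bound $d_G(y^+)>(n-\alpha(G))/2$ does not supply. Moreover the target of your contradiction is misidentified: the statement being contradicted is not the minimality of $|\mathcal{F}|$ directly but the independence of co-absorbable vertices (Claim~\ref{co-abso}), which is itself derived from that minimality via Lemmas~\ref{lem:property of 2K_2-free} and~\ref{lem:co-absorbable vtx}. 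Once you replace your second step by the observation that $y^+$ becomes co-absorbable, the argument closes immediately and coincides with the paper's proof.
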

    \begin{proof}
        We focus on two edges $xx^+$ and $y^+y^{++}$. 
        Since $x$ is of $B$-type, we have  $xy^+ \notin E(G)$. 
        Also, by Claim \ref{edge_lemma}, $x^+y^+ \notin E(G)$. 
        If $x^+y^{++} \in E(G)$, then 
        \[xy\overleftarrow{D}y^{++}x^+\overrightarrow{C}x\]
        is a cycle on $(V(C) \cup V(D)) \setminus \{y^+\}$, which implies that $y^+$ is co-absorbable. 
        Then, two co-absorbable vertices $y$ and $y^+$ are adjacent in $G$, contrary to Claim \ref{co-abso}. 
        Thus, $x^+y^{++} \notin E(G)$. 
        Since $\{x,x^+,y^+,y^{++}\}$ does not induce $2K_2$, we have $xy^{++} \in E(G)$.
    \end{proof}

    \begin{cla}\label{A-type}
        If $x \in V(G)$ is of $A$-type, then $x^+$ and $x^-$ are co-absorbable, and $x$ is not co-absorbable. 
    \end{cla}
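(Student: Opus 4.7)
The plan is to derive ``$x$ is not co-absorbable'' as a short corollary of the other two statements together with Claim \ref{co-abso}: if $x$ were also co-absorbable, then $\{x, x^+\}$ would be a co-absorbable set and hence independent in $G$, contradicting $xx^+ \in E(C)$. So the real work is to show that $x^+$ and $x^-$ are co-absorbable, and I will focus on $x^+$; the case of $x^-$ comes from the symmetric argument running along the predecessor direction of $C$.

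Fix notation: let $C \in \mathcal{F}$ be the cycle containing $x$, and let $D \in \mathcal{F} \setminus \{C\}$ and $y \in V(D)$ witness the $A$-type of $x$, so that $xy, xy^+ \in E(G)$. My aim is to construct a single cycle $C^*$ on $V(C) \cup V(D) \setminus \{x^+\}$; then $(F \setminus \{C, D\}) \cup \{C^*\}$ will be a 2-factor of $G - x^+$ with exactly one fewer component than $F$, certifying that $x^+$ is co-absorbable. The backbone of $C^*$ will be the Hamilton path $x^{++} \overrightarrow{C} x^- x$ of $C - x^+$, glued at the endpoint $x$ to a Hamilton path of $D$ through one of the $A$-type edges $xy$ or $xy^+$. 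What remains is to supply a closing edge from $x^{++}$ back to $\{y, y^+\}$.

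Producing this closing edge is the main obstacle, and it is where $2K_2$-freeness enters. The plan is to apply Observation \ref{prop:2K_2-free} to the disjoint edges $yy^+ \in E(D)$ and $x^+ x^{++} \in E(C)$: their four endpoints must span some additional edge. Applying Claim \ref{edge_lemma} to each of the adjacencies $xy$ and $xy^+$ kills both $x^+ y$ and $x^+ y^+$, so the surviving cross-edge has to be $x^{++} y$ or $x^{++} y^+$. In the former case I will take $C^* = x^{++} y \overleftarrow{D} y^+ x x^- \overleftarrow{C} x^{++}$, and in the latter the analogous $C^* = x^{++} y^+ \overrightarrow{D} y x x^- \overleftarrow{C} x^{++}$; a quick check confirms that each spans $V(C) \cup V(D) \setminus \{x^+\}$ using only available edges. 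The mirror argument for $x^-$ uses $2K_2$-freeness on $yy^+$ and $x^- x^{--}$ together with the same two applications of Claim \ref{edge_lemma} to force $x^{--}$ adjacent to $y$ or $y^+$, yielding the analogous merged cycle on $V(C) \cup V(D) \setminus \{x^-\}$; I expect that direction to be routine relabelling.
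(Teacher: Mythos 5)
Your argument is correct and matches the paper's proof essentially step for step: both kill $x^+y$ and $x^+y^+$ via Claim \ref{edge_lemma}, apply $2K_2$-freeness to the edges $x^+x^{++}$ and $yy^+$ to force $x^{++}y$ or $x^{++}y^+$, build the same merged cycle on $(V(C)\cup V(D))\setminus\{x^+\}$, and then deduce that $x$ is not co-absorbable from Claim \ref{co-abso}. No gaps; the $x^-$ case is the routine mirror image, exactly as the paper also treats it.
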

    \begin{proof}
        Suppose that $C \in \mathcal{F}$ and $x \in A \cap V(C)$. 
        Then, we have a cycle $D \in \mathcal{F} \setminus \{C\}$ and $y \in V(D)$ such that $xy,xy^+ \in E(G)$. 
        By Claim \ref{edge_lemma}, $x^+y,x^+y^+ \notin E(G)$. 
        Considering two edges $x^+x^{++}$ and $yy^+$, since $G$ is $2K_2$-free, $x^{++}y \in E(G)$ or $x^{++}y^+ \in E(G)$. 
        Therefore,
        \begin{eqnarray*}
            \left \{
            \begin{aligned}
                &xy^+\overrightarrow{D}yx^{++}\overrightarrow{C}x \ &\text{if}& \ x^{++}y \in E(G),\\
                &xy\overleftarrow{D}y^+x^{++}\overrightarrow{C}x \ &\text{if}& \ x^{++}y^+ \in E(G)
            \end{aligned}
            \right.
        \end{eqnarray*}
        is a cycle on $(V(C) \cup V(D)) \setminus \{x^+\}$. 
        Thus, $x^+$ is co-absorbable. 
        We similarly find that $x^-$ is co-absorbable. 
        Since co-absorbable vertices are independent by Claim \ref{co-abso}, we conclude that $x$ is not co-absorbable.
    \end{proof}

    Let $C \in \mathcal{F}$ and $xy \in E(C)$. 
    By Claims \ref{co-abso} and \ref{A-type}, $x$ or $y$ is of $B$-type. 
    If both $x$ and $y$ are of $B$-type, we say that the edge $xy$ is of {\it $B$-type}; otherwise, $xy$ is of {\it $AB$-type}. 
    If all edges of $C$ are of $AB$-type, we say that $C$ is {\it $AB$-alternating}. 
    More generally, if vertices on $C$ are alternating between two disjoint sets $X \subset V(G)$ and $Y \subset V(G)$, we say that $C$ is {\it $XY$-alternating}.

    \begin{cla}\label{B_edge}
        Let $C \in \mathcal{F}$ and $xy \in E(C)$. 
        If $xy$ is of $B$-type, then every $D \in \mathcal{F} \setminus \{C\}$ is $V_D(xy)\overline{V}_D(xy)$-alternating.
    \end{cla}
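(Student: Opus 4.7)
The plan is to show directly that for every $u \in V(D)$, the two consecutive vertices $u$ and $u^+$ lie in different blocks of the partition $\{V_D(xy), \overline{V}_D(xy)\}$ of $V(D)$. Since this partition has only two parts, such a local dichotomy immediately gives that $D$ is $V_D(xy)\overline{V}_D(xy)$-alternating. Without loss of generality I orient $C$ so that $y = x^+$.

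First I would rule out the possibility $u, u^+ \in \overline{V}_D(xy)$. If this held, then $xu, yu, xu^+, yu^+ \notin E(G)$, so the two edges $xy$ and $uu^+$ of $G$ would have no edges between their endpoints, producing a $2K_2$ induced on $\{x, y, u, u^+\}$, which contradicts the $2K_2$-freeness of $G$.

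Next I would rule out the possibility $u, u^+ \in V_D(xy)$ by a short case analysis on which of $x, y$ each vertex is adjacent to. If both $u$ and $u^+$ are adjacent to $x$, then $x$ is adjacent to two consecutive vertices of $D$, contradicting $x \in B$; the case where both are adjacent to $y$ is symmetric. The remaining subcases are $xu, yu^+ \in E(G)$ and $yu, xu^+ \in E(G)$. In the first, applying Claim \ref{edge_lemma} to the edge $xu$ (with $x \in V(C)$ and $u \in V(D)$ lying in distinct cycles) yields $E_G(\{x^-, x^+\}, \{u^-, u^+\}) = \emptyset$, so in particular $yu^+ = x^+u^+ \notin E(G)$, a contradiction. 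The second subcase is handled analogously, applying Claim \ref{edge_lemma} to the edge $yu$ and using $x = y^-$.

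Having eliminated both non-alternating configurations at every edge of $D$, the claim follows. I do not anticipate a serious obstacle: all the needed ingredients, namely the $B$-type hypothesis on $x$ and $y$, the $2K_2$-free condition, and Claim \ref{edge_lemma}, apply directly, and the only care required is to keep track of the four possible edges between $\{x,y\}$ and $\{u,u^+\}$ in the case analysis.
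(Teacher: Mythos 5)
Your proposal is correct and uses the same ingredients as the paper's proof: $2K_2$-freeness to force at least one of $u,u^+$ into $V_D(xy)$, and Claim \ref{edge_lemma} together with the $B$-type hypothesis to forbid both. The paper organizes this slightly more compactly (assuming $ux\in E(G)$ and directly deriving $vx,vy\notin E(G)$ for the neighbor $v$ of $u$ on $D$), but the argument is essentially identical to yours.
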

    \begin{proof}
        It suffices to show that for every $D \in \mathcal{F} \setminus \{C\}$ and $uv \in E(D)$, exactly one vertex of $\{u,v\}$ is in $V_D(xy)$. 
        Since $G$ is $2K_2$-free, one of $\{u,v\}$ must be in $V_D(xy)$. 
        Suppose without loss of generality, that $u \in V_D(xy)$ with $ux \in E(G)$. 
        Then by Claim \ref{edge_lemma}, we have $vy \notin E(G)$. 
        As $x$ is of $B$-type and $ux \in E(G)$, we further have $vx \notin E(G)$. 
        Thus, $v \in \overline{V}_D(xy)$.
    \end{proof}

    \begin{cla}\label{cyc_cont_B_edge}
        Each of the following holds.
        \begin{enumerate}[$(1)$]
            \item $\mathcal{F}$ has a cycle containing a $B$-type edge.
            \item If $C \in \mathcal{F}$ contains a $B$-type edge, then $|V(C)| \geq \frac{n}{3}+2$.
        \end{enumerate}
    \end{cla}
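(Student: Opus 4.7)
The plan is to prove the two parts separately, each by constructing a suitably large independent set and then invoking $\alpha(G)\le n/3$ from the 2-toughness of $G$.

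For part~(1), I would argue by contradiction, supposing that no cycle in $\mathcal{F}$ contains a $B$-type edge. First I would rule out $AA$-edges on any cycle of $\mathcal{F}$: if $x$ and $x^+$ are both of $A$-type, then Claim~\ref{A-type} applied to $x$ forces $x^+$ to be co-absorbable while Claim~\ref{A-type} applied to $x^+$ forbids this. Hence every cycle in $\mathcal{F}$ is $AB$-alternating, so in particular $|V(C)\cap B|=|V(C)|/2$ for each $C\in\mathcal{F}$, and summing gives $|B|=n/2$. Moreover, since alternation makes every vertex of $B$ the successor of some $A$-vertex on its cycle, Claim~\ref{A-type} shows that every vertex of $B$ is co-absorbable, and then Claim~\ref{co-abso} makes $B$ independent in $G$. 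Combining $|B|=n/2$ with $\alpha(G)\le n/3$ gives $n/2\le n/3$, which contradicts $n\ge 3$.

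For part~(2), let $xy\in E(C)$ be a $B$-type edge. My plan is to bound $\alpha(G)$ from below using the set $\{x\}\cup\overline{V}_G(xy)$. By Claim~\ref{B_edge}, every $D\in\mathcal{F}\setminus\{C\}$ is $V_D(xy)\overline{V}_D(xy)$-alternating, so $|\overline{V}_D(xy)|=|V(D)|/2$; summing over all such $D$ yields
\[
|\overline{V}_G(xy)| \;\ge\; \sum_{D\in\mathcal{F}\setminus\{C\}} |\overline{V}_D(xy)| \;=\; \frac{n-|V(C)|}{2}.
\]
By Observation~\ref{prop:2K_2-free}, $\overline{V}_G(xy)$ is independent in $G$, and since no vertex of $\overline{V}_G(xy)$ is adjacent to $x$, the set $\{x\}\cup\overline{V}_G(xy)$ is also independent. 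Therefore
\[
\alpha(G) \;\ge\; 1+\frac{n-|V(C)|}{2},
\]
and combining with $\alpha(G)\le n/3$ rearranges immediately to $|V(C)|\ge n/3+2$.

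The main obstacle I anticipate lies in part~(1): one must carefully rule out $AA$-edges (which does not follow directly from the hypothesis but requires Claim~\ref{A-type}) in order to conclude that cycles are genuinely $AB$-alternating, and then exploit the alternation to make every $B$-vertex co-absorbable. Once that is in place, the independence bound $|B|\le\alpha(G)\le n/3$ closes the argument. Part~(2) is more mechanical; its only subtlety is remembering to include $x$ in the independent set, since this single extra vertex is precisely what upgrades the bound from $|V(C)|\ge n/3$ to $|V(C)|\ge n/3+2$.
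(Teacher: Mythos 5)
Your proposal is correct and follows essentially the same route as the paper: part (1) rules out $AA$-edges via Claims \ref{co-abso} and \ref{A-type}, deduces that all cycles would be $AB$-alternating, and contradicts $\alpha(G)\le n/3$ (the paper phrases this as $|A|=|A^+|\le n/3$ versus your equivalent $|B|=n/2$ with $B=A^+$ independent); part (2) is the paper's argument verbatim, using Claim \ref{B_edge} to count $\overline{V}_D(xy)$ and the independent set $\overline{V}_{G}(xy)\cup\{x\}$.
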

    \begin{proof}
        By Claims \ref{co-abso} and \ref{A-type}, $A^+$ is independent in $G$ and so $|A|=|A^+| \leq \frac{n}{3}$, which implies (1). 
        For (2), suppose that $C \in \mathcal{F}$ contains a $B$-type edge $xy \in E(C)$. 
        By Claim \ref{B_edge}, we have
        \[|\overline{V}_{G-C}(xy)|=\sum_{D \in \mathcal{F} \setminus \{C\}}|\overline{V}_D(xy)|=\frac{n-|V(C)|}{2}.\]
        Since $\overline{V}_{G-C}(xy) \cup \{x\}$ is independent in $G$, we find that
        \[\frac{n-|V(C)|}{2}+1=|\overline{V}_{G-C}(xy) \cup \{x\}| \leq \frac{n}{3},\]
        which implies that $|V(C)| \geq \frac{n}{3}+2$.
    \end{proof}

    Let $\mathcal{F}_{AB}=\{C \in \mathcal{F} \mid \text{$C$ is $AB$-alternating}\}$ and $\overline{\mathcal{F}}_{AB}=\mathcal{F} \setminus \mathcal{F}_{AB}$. 
    By Claim \ref{cyc_cont_B_edge}, we have $|\overline{\mathcal{F}}_{AB}| \in \{1,2\}$. 
    Let $H \in \overline{\mathcal{F}}_{AB}$ and $x \in V(H) \cap B$.  
    If there exists $K \in \mathcal{F}_{AB}$ such that $V_K(x)=B \cap V(K)$, we say that $x$ is {\it bad w.r.t.\ $K$}. 
    For $H \in \overline{\mathcal{F}}_{AB}$, we define the set $V_{bad}(H)$ to be
    \[V_{bad}(H)=(V(H) \cap A) \cup \{x \in V(H) \cap B \mid \text{$x$ is bad w.r.t.\ some $K \in \mathcal{F}_{AB}$}\}.\]
    Let $A_0=\bigcup_{K \in \mathcal{F}_{AB}}(V(K) \cap A)$ and $B_0=\bigcup_{K \in \mathcal{F}_{AB}}(V(K) \cap B)$. 
    Note that $B_0=A_0^+ \subset A^+$.

    \begin{cla}\label{bad_set_lemma}
        Let $H \in \overline{\mathcal{F}}_{AB}$ and $x \in V(H)$. 
        If $E_G(x,B_0) \neq \emptyset$, Then $x \in V_{bad}(H)$.
    \end{cla}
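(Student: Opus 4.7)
The plan is to split on the type of $x$. The case $x \in V(H) \cap A$ is immediate from the definition of $V_{bad}(H)$, so I focus on $x \in V(H) \cap B$. By hypothesis $x$ has a neighbor $y \in B_0$; let $K \in \mathcal{F}_{AB}$ be the cycle of $\mathcal{F}_{AB}$ containing $y$. The goal will be to prove $V_K(x) = B \cap V(K)$, which by definition makes $x$ bad w.r.t.\ $K$ and hence places $x$ in $V_{bad}(H)$.

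First I would establish the preparatory observation that every vertex of $B \cap V(K)$ is co-absorbable. Since $K$ is $AB$-alternating, each such vertex has the form $v^+$ for some $v \in A \cap V(K)$, and Claim \ref{A-type} then guarantees $v^+$ is co-absorbable. In particular $y$ itself is co-absorbable.

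Next I would apply Claim \ref{B-type-co-abso} iteratively around $K$. Note $H \neq K$ since $H \in \overline{\mathcal{F}}_{AB}$. Because $x \in V(H)$ is of $B$-type, $y$ is co-absorbable, and $xy \in E(G)$, Claim \ref{B-type-co-abso} yields $xy^+ \notin E(G)$ and $xy^{++} \in E(G)$. The vertex $y^{++}$ again lies in $B \cap V(K)$ and is therefore co-absorbable by the previous step, so a second application gives $xy^{+++} \notin E(G)$ and $xy^{++++} \in E(G)$, and so on. Walking around $K$ in this fashion, I would conclude that $x$ is adjacent to every vertex of $B \cap V(K)$ and to no vertex of $A \cap V(K)$, so $V_K(x) = B \cap V(K)$ as required. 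The only point that demands a little care is verifying that the iteration wraps around $K$ correctly, but since $K$ has even length (being $AB$-alternating) and each step advances two positions, after $|V(K)|/2$ applications the process returns to $y$ having visited every $B$-type vertex of $K$. Beyond this bookkeeping I expect no serious obstacle; the entire argument reduces to repeated use of Claims \ref{A-type} and \ref{B-type-co-abso}.
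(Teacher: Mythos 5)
Your proof is correct and follows essentially the same route as the paper's: reduce to the case that $x$ is of $B$-type, note that every vertex of $B\cap V(K)=A^+\cap V(K)$ is co-absorbable by Claim \ref{A-type}, and then iterate Claim \ref{B-type-co-abso} around the $AB$-alternating cycle $K$ to conclude $V_K(x)=B\cap V(K)$. The only difference is that you spell out the wrap-around bookkeeping that the paper compresses into ``by the repetition of this argument.''
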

    \begin{proof}
        Suppose that $E_G(x,B_0) \neq \emptyset$. 
        Then we can take $K \in \mathcal{F}_{AB}$ and $y \in V(K) \cap B$ such that $xy \in E(G)$. 
        If $x$ is of $A$-type, then $x \in V_{bad}(H)$ by the definition. 
        We may so assume that $x$ is of $B$-type. 
        Since $B \cap V(K)=A^+ \cap V(K)$, $y$ is co-absorbable. 
        Thus, $xy^+ \notin E(G)$ and $xy^{++} \in E(G)$ by Claim \ref{B-type-co-abso}. 
        By the repetition of this argument, we find that $V_K(x)=B \cap V(K)$, and hence $x$ is bad w.r.t.\ $K$. 
    \end{proof}

    \begin{cla}\label{bad_set_co-abso}
        Let $H \in \overline{\mathcal{F}}_{AB}$. 
        If $x \in V(H)$ is bad w.r.t.\ some $K \in \mathcal{F}_{AB}$, then $x^+$ and $x^-$ are co-absorbable. 
    \end{cla}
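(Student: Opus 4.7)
The plan is to exhibit a $2$-factor of $G - x^+$ with fewer components than $F$, thereby showing $x^+$ is co-absorbable; the statement for $x^-$ then follows by reversing the orientation of $H$. A preliminary observation is that $x^+ \in B$: otherwise $x^+ \in A$, so by Claim \ref{A-type}, $x = (x^+)^-$ is co-absorbable. Since $x$ is adjacent to some $b \in B \cap V(K)$, and $b$ is co-absorbable (as the successor on the $AB$-alternating cycle $K$ of an $A$-type vertex, by Claim \ref{A-type}), this would contradict Claim \ref{co-abso}.

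Write $V(K) = \{a_1, b_1, \ldots, a_k, b_k\}$ in cyclic order with $a_i \in A$, $b_i \in B$, and $a_i^+ = b_i$. Bad-ness of $x$ gives $N_G(x) \cap V(K) = \{b_1, \ldots, b_k\}$, and Claim \ref{edge_lemma} applied to each $xb_j$ shows that $x^+$ is adjacent to no $a_j$. I further claim that $x^+$ has no $b_j$ neighbor either: if $x^+ b_{j+1} \in E(G)$ for some $j$, then
\[
x \cdot b_j \overleftarrow{K} b_{j+1} \cdot x^+ \overrightarrow{H} x
\]
is a cycle on $(V(H) \cup V(K)) \setminus \{a_{j+1}\}$ (the backward traversal of $K$ from $b_j$ to $b_{j+1}$ omits precisely $a_{j+1}$). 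Replacing $H, K$ in $F$ by this cycle yields a $2$-factor of $G - a_{j+1}$ with $\omega(F)-1$ components, making $a_{j+1}$ co-absorbable; but $a_{j+1} \in A$ is not, by Claim \ref{A-type}. Hence $N_G(x^+) \cap V(K) = \emptyset$.

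Applying $2K_2$-freeness to $x^+ x^{++} \in E(H)$ and $a_j b_j \in E(K)$ then forces, for every $j$, $x^{++} a_j \in E(G)$ or $x^{++} b_j \in E(G)$. If $x^{++} a_j \in E(G)$ for some $j$, the cycle
\[
x \cdot b_{j-1} \overleftarrow{K} a_j \cdot x^{++} \overrightarrow{H} x
\]
covers $(V(H) \setminus \{x^+\}) \cup V(K)$ and gives the required $2$-factor. If $x^{++} \in A$, Claim \ref{A-type} applied to $x^{++}$ immediately yields $x^+ = (x^{++})^-$ co-absorbable.

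The only remaining case is $x^{++} \in B$ with $N_G(x^{++}) \cap V(K) = \{b_1, \ldots, b_k\}$, so that $x^{++}$ itself is bad w.r.t. $K$; I expect this to be the main obstacle. Iterating the second-paragraph argument with $x^{++}$ in place of $x$ forces $N_G(x^{+++}) \cap V(K) = \emptyset$, and continuing around $H$ produces an alternation of "bad w.r.t. $K$" vertices and vertices with no $V(K)$-neighbor. If $|V(H)|$ is odd, this alternation already contradicts the bad-ness of $x$ upon returning to $x_0 = x$. If $|V(H)|$ is even, the resolution should come by combining Claim \ref{B_edge} applied to a $B$-type edge of $H$ with the $2K_2$-free structure and the toughness bound $\alpha(G) \leq n/3$, to produce either an adjacency enabling a valid merging cycle or a direct violation of the toughness condition.
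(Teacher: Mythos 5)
Your argument is sound up to the trichotomy for $x^{++}$: the cases ``$x^{++}$ is adjacent to some $a_j$'' and ``$x^{++}\in A$'' are handled correctly and correspond to the easy half of the paper's proof (the paper obtains $x^+y,x^+y^+\notin E(G)$ more directly, from $y$ being of $B$-type and from Claim \ref{edge_lemma}, but your cycle argument excluding $x^+b_{j+1}\in E(G)$ is also valid). The genuine gap is the remaining case, where $V_K(x^{++})=B\cap V(K)$, and that case is the heart of the claim. Your proposed iteration around $H$ yields nothing when $|V(H)|$ is even: the configuration in which $H$ alternates between vertices that are bad w.r.t.\ $K$ and vertices with no neighbour in $K$ is perfectly consistent with everything you have established (note that $xx^+$ is then a $B$-type edge, and Claim \ref{B_edge} merely reconfirms that $K$ alternates between $V_K(x)=B\cap V(K)$ and its complement). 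The independence/toughness fallback you gesture at does not obviously close it either: the natural independent set it produces, $\{x^+,x^{+++},\ldots\}\cup(B\cap V(K))$, has size $\tfrac{1}{2}\left(|V(H)|+|V(K)|\right)$, which need not exceed $n/3$ when $\mathcal{F}$ has other cycles.

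The paper closes this case with an idea absent from your proposal: it exploits that $y^+$ (your $a_{j+1}$) is of $A$-type, hence has two consecutive neighbours $z,z^+$ on some cycle $Q\in\mathcal{F}\setminus\{K\}$. Claim \ref{edge_lemma} gives $yz,yz^+\notin E(G)$, so $2K_2$-freeness applied to the edges $y^-y$ and $zz^+$ yields $y^-z\in E(G)$ or $y^-z^+\in E(G)$. One then builds either a single cycle (if $Q=H$) or a pair of cycles (if $Q\neq H$), for instance $xyx^{++}\overrightarrow{H}x$ together with $zy^-\overleftarrow{K}y^+z^+\overrightarrow{Q}z$, covering $(V(H)\cup V(K)\cup V(Q))\setminus\{x^+\}$ with strictly fewer components, so $x^+$ is co-absorbable. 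Some such detour through a third cycle (or an equivalent device) is needed; the conclusion does not follow from the structure of $H$ and $K$ alone.
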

    \begin{proof}
        Suppose that $x \in V(H)$ is bad w.r.t.\ $K \in \mathcal{F}_{AB}$. 
        Now we have $V_K(x)=B \cap V(K)$. 
        Let $y \in B \cap V(K)$. 
        We focus on two edges $x^+x^{++}$ and $yy^+$. 
        Since $y$ is of $B$-type, $x^+y \notin E(G)$. 
        Moreover, $x^+y^+ \notin E(G)$ by Claim \ref{edge_lemma}. 
        Hence $x^{++}y \in E(G)$ or $x^{++}y^+ \in E(G)$ since $G$ is $2K_2$-free. 
        If $x^{++}y^+ \in E(G)$, then 
        \[xy\overleftarrow{K}y^+x^{++}\overrightarrow{H}x\] 
        is a cycle on $(V(H) \cup V(K)) \setminus \{x^+\}$. 
        Thus, $x^+$ is co-absorbable. 
        Next suppose that $x^{++}y \in E(G)$ and $x^{++}y^+ \notin E(G)$. 
        Since $y^+$ is of $A$-type, we can take a cycle $Q \in \mathcal{F} \setminus \{K\}$ and $z \in V(Q)$ such that $y^+z,y^+z^+ \in E(G)$. 
        Note that $z \notin \{x^-,x,x^+,x^{++}\}$ because $x^-y^+,x^+y^+,x^{++}y^+ \notin E(G)$. 
        Note that $yz,yz^+ \notin E(G)$ by Claim \ref{edge_lemma}, and hence we have $y^-z \in E(G)$ or $y^-z^+ \in E(G)$ since $G$ is $2K_2$-free. 
        Thus, considering one or two cycles
        \begin{eqnarray*}
            \left \{
                \begin{aligned}
                    &xyx^{++}\overrightarrow{H}zy^-\overleftarrow{K}y^+z^+\overrightarrow{H}x \ &\text{if}& \ H=Q \text{ and }y^-z \in E(G),\\
                    &xyx^{++}\overrightarrow{H}zy^+\overrightarrow{K}y^-z^+\overrightarrow{H}x \ &\text{if}& \ H=Q \text{ and }y^-z^+ \in E(G),\\
                    &xyx^{++}\overrightarrow{H}x, \ zy^-\overleftarrow{K}y^+z^+\overrightarrow{Q}z \ &\text{if}& \ H \neq Q \text{ and }y^-z \in E(G),\\
                    &xyx^{++}\overrightarrow{H}x, \ zy^+\overrightarrow{K}y^-z^+\overrightarrow{Q}z \ &\text{if}& \ H \neq Q \text{ and }y^-z^+ \in E(G),\\
                \end{aligned}
            \right.
        \end{eqnarray*}
        we find that $x^+$ is co-absorbable. 
        We similarly find that $x^-$ is co-absorbable.
    \end{proof}

    \begin{case}
        $|\overline{\mathcal{F}}_{AB}|=1$.
    \end{case}

    Let $C$ be the unique cycle in $\overline{\mathcal{F}}_{AB}$. 
    Now we have $\mathcal{F}_{AB} \neq \emptyset$ since $|\mathcal{F}|>1$. 
    If $V_{bad}(C)=\emptyset$, then $E_G(C,B_0)=\emptyset$ by Claim \ref{bad_set_lemma}.
    However, then every vertex in $B_0$ is an isolated vertex in $G-A_0$, a contradiction to the toughness condition of $G$. 
    Thus, we have $V_{bad}(C) \neq \emptyset$.

    For a vertex $x \in V_{bad}(C)$, we define
    \[U_x^0=\{x^+\} \text{ and } U_x^1=\{y \in V(C) \mid y^+ \in V_C(U_x^0) \setminus V_{bad}(C)\} \setminus U_x^0.\]
    For each vertex $x_1 \in U_x^1$, define the path
    \[P_{[x_1,x]}=x_1\overleftarrow{C}x^+x_1^+\overrightarrow{C}x\]
    to be the directed path from $x_1$ to $x$. 
    In general, for $i \geq 2$, we define
    \[U_x^i=\{u \mid u^\dagger v \in E(G), \text{ for some }v \in U_x^{i-1}\text{ and }u^\dagger \in V(C) \setminus V_{bad}(C)\} \setminus \bigcup^{i-1}_{j=0}U_x^j\]
    where $u^\dagger$ is the immediate successor of $u$ on $P_{[v,x]}$. 
    For each $u \in U^i_x$, we choose and fix $v \in U^{i-1}_x$ such that $u^\dagger v \in E(G)$ and $u^\dagger \notin V_{bad}(C)$ on $P_{[v,x]}$, and define the path
    \[P_{[u,x]}=uP_{[v,x]}vu^\dagger P_{[v,x]}x\]
    to be the directed path from $u$ to $x$.
    We also define
    \[U_x^\infty=\bigcup^\infty_{i=0}U_x^i.\] 

    \begin{cla}\label{U_x^infty}
        Let $x \in V_{bad}(C)$. 
        Let $D \in \mathcal{F}_{AB}$ such that $x$ is bad or $A$-type w.r.t.\ $D$ and let $u \in B \cap V(D)$ with $ux \in E(G)$. 
        Then each of the followings holds:
        \begin{enumerate}[$(1)$]
            \item For any $v \in U_x^\infty$, $uv \notin E(G)$ and $u^+v \notin E(G)$.
            \item For every $y \in V(C) \setminus V_{bad}(C)$ such that $y$ is adjacent to some vertex $v \in U_x^\infty$, $yu^+ \in E(G)$. 
        \end{enumerate}
    \end{cla}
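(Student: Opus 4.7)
My plan is to first prove (1) by induction on the level $i$ with $v \in U_x^i$, and then deduce (2) as a short corollary via $2K_2$-freeness and Claim \ref{bad_set_lemma}.

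For (2), given $y \in V(C) \setminus V_{bad}(C)$ adjacent to some $v \in U_x^\infty$, I would apply $2K_2$-freeness to the disjoint edges $yv$ and $uu^+$: at least one of $yu$, $yu^+$, $vu$, $vu^+$ must lie in $E(G)$. Part (1) eliminates $vu$ and $vu^+$. The remaining possibility $yu \in E(G)$ would give $E_G(y, B_0) \neq \emptyset$ (since $u \in B_0$), forcing $y \in V_{bad}(C)$ by Claim \ref{bad_set_lemma} and contradicting the hypothesis; hence $yu^+ \in E(G)$.

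For (1), the base case $v = x^+$ is immediate: Claim \ref{edge_lemma} applied to $ux$ gives $u^+x^+ \notin E(G)$, while $ux^+ \notin E(G)$ because $u$ is of $B$-type and $x, x^+$ are consecutive on $C \neq D$. For the inductive step $v \in U_x^i$, $i \geq 1$, the construction provides a Hamiltonian path $P_{[v,x]}$ on $V(C)$ from $v$ to $x$ in $G$. To handle $u^+v$, I would argue by contradiction: supposing $u^+v \in E(G)$, the cycle
\[
u^+ \, v \, \overrightarrow{P_{[v,x]}} \, x \, u \, \overleftarrow{D} \, u^+
\]
is a Hamiltonian cycle on $V(C) \cup V(D)$ in $G$, which replaces the pair $C, D$ in $F$ by a single cycle, yielding a $2$-factor of $G$ with $|\mathcal{F}|-1$ components and contradicting the minimality of $|\mathcal{F}|$.

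The main obstacle is ruling out $uv \in E(G)$, because the natural analogue $u \, v \, \overrightarrow{P_{[v,x]}} \, x \, u$ covers only $V(C) \cup \{u\}$, so one needs an additional bridge from $V(C)$ to $V(D) \setminus \{u\}$. When $x$ is of $A$-type w.r.t.\ $D$ with a consecutive pair of neighbors on $D$ containing $u$, the bridge $xu^+$ (or $xu^-$) is available and I would close the cycle via $u \, v \, \overrightarrow{P_{[v,x]}} \, x \, u^+ \, \overrightarrow{D} \, u$ (or its mirror using $u^-$ and $\overleftarrow{D}$), again covering $V(C) \cup V(D)$ and producing the contradiction. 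The hard sub-case is when neither $xu^+$ nor $xu^-$ is an edge, which occurs in particular when $x$ is bad w.r.t.\ $D$ so that $u^\pm \in A$ are not adjacent to $x$; here I would exploit that Claim \ref{bad_set_lemma} forces $v \in V_{bad}(C)$, making $v^+, v^-$ co-absorbable by Claims \ref{A-type} and \ref{bad_set_co-abso}, while $u$ itself is co-absorbable by Claim \ref{A-type} (applied to $u^+ \in A$). The resulting independence constraints from Claim \ref{co-abso}, together with $2K_2$-freeness applied to $uv$ and suitable edges on $D$ and $C$ (using Claim \ref{B-type-co-abso} where $u$ meets a co-absorbable vertex), should deliver the required contradiction.
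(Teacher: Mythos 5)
Your treatment of part (2) given part (1), of the base case $v=x^+$, and of the exclusion of $u^+v$ in the inductive step (via the cycle $vu^+\overleftarrow{D}uxP_{[v,x]}v$ contradicting the minimality of $|\mathcal{F}|$) all match the paper. But the step you yourself flag as ``the main obstacle'' --- ruling out $uv\in E(G)$ for $v\in U_x^i$, $i\ge 1$ --- is not actually proved. Your Route via the bridges $xu^+$ or $xu^-$ only covers the case where the given $u$ happens to lie in a consecutive pair of neighbours of $x$ on $D$, and your ``hard sub-case'' ends with ``should deliver the required contradiction,'' which is a hope, not an argument. The observation that $uv\in E(G)$ would force $v\in V_{bad}(C)$ by Claim \ref{bad_set_lemma} is correct (since $u\in B_0$), and it does make $u$ co-absorbable and $v^{\pm}$ co-absorbable, but none of the listed tools (Claims \ref{co-abso}, \ref{B-type-co-abso}, $2K_2$-freeness) then produces a contradiction: two adjacent vertices one of which is merely \emph{bad} and the other co-absorbable violate nothing, and the natural cycle $uvP_{[v,x]}xu$ covers only $V(C)\cup\{u\}$, so it does not reduce $|\mathcal{F}|$.

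The missing idea is the reason the paper proves (1) and (2) \emph{simultaneously} by induction on $i$, rather than proving (1) first and deducing (2) afterwards as you propose. In the inductive step one takes $w\in U_x^{i-1}$ with $v^\dagger w\in E(G)$ and $v^\dagger\notin V_{bad}(C)$; part (2) of the induction hypothesis (applied to $y=v^\dagger$ and $w$) yields $v^\dagger u^+\in E(G)$, and together with part (1) of the hypothesis ($\bigcup_{j<i}U_x^j\subset\overline{V}_C(uu^+)$) this shows $v^\dagger\notin\bigcup_{j<i}U_x^j$, hence that $vv^\dagger$ is an edge of $C$ itself and not a chord inserted in building $P_{[w,x]}$. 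Now Claim \ref{edge_lemma}, applied to the adjacent pair $v^\dagger\in V(C)$, $u^+\in V(D)$, kills $vu$ in one line, since $v$ is a $C$-neighbour of $v^\dagger$ and $u$ is a $D$-neighbour of $u^+$. By restructuring the induction so that (2) is only a corollary at the end, you have made exactly this mechanism unavailable, and the gap it leaves is genuine.
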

    \begin{proof}
        We prove (1) and (2) simultaneously by applying induction on $i$ with $v \in U^i_x$. 
        For $i=0$, we consider $v=x^+$ as $U_x^0=\{x^+\}$. 
        Since $ux \in E(G)$, we have $u^+x^+ \notin E(G)$. 
        Furthermore, since $u$ is of $B$-type, we have $ux^+ \notin E(G)$. 
        For every $y \in V(C) \setminus V_{bad}(C)$ such that $x^+y \in E(G)$, we have $yu \in E(G)$ or $yu^+ \in E(G)$ by considering two edges $x^+y$ and $uu^+$. 
        Since $y \notin V_{bad}(C)$, $yu \notin E(G)$ by Claim \ref{bad_set_lemma}. Thus, we have $yu^+ \in E(G)$.

        Assume now that both (1) and (2) are true for every $j=0,1,\ldots,i-1$ with $i \geq 1$. 
        Let $v \in U_x^i$. 
        By the definition of $U_x^i$, there exists $w \in U_x^{i-1}$ such that $v^\dagger \notin V_{bad}(C)$ and $v^\dagger w \in E(G)$ on $P_{[w,x]}$. 
        By the induction hypothesis, $v^\dagger u^+ \in E(G)$ and $U_x^j \subset \overline{V}_C(uu^+)$ for every $j=0,1,\ldots,i-1$. 
        Thus, we have $v^\dagger \notin \bigcup_{j=0}^{i-1}U_x^j$. 
        Furthermore, we have $v \notin \bigcup_{j=0}^{i-1}U_x^j$ by the definition of $U_x^i$. 
        Since any edge on $P_{[w,x]}$ which is not an edge on $C$ has one end-vertex in $\bigcup_{j=0}^{i-1}U_x^j$, $vv^\dagger$ is an edge on $C$. 
        Thus, since $v^\dagger u^+ \in E(G)$, we have $vu \notin E(G)$ by Claim \ref{edge_lemma}. 
        If $vu^+ \in E(G)$, then 
        \[vu^+\overleftarrow{D}uxP_{[v,x]}v\]
        is a cycle combining $C$ and $D$ into a single cycle, contrary to the minimality of $|\mathcal{F}|$. 
        Thus, $vu^+ \notin E(G)$. 
        For every $y \in V(C) \setminus V_{bad}(C)$ such that $yv \in E(G)$, we have $yu \in E(G)$ or $yu^+ \in E(G)$ by considering two edges $yv$ and $uu^+$. 
        Since $y \notin V_{bad}(C)$, $yu \notin E(G)$ by Claim \ref{bad_set_lemma}. Thus, we have $yu^+ \in E(G)$. 
    \end{proof}

    We define 
    \[U^\infty=\bigcup_{x \in V_{bad}(C)}U_x^\infty.\]
    Note that $V_{bad}(C) \subset V_C(U^\infty)$ because $x^+ \in U^\infty$ for each $x \in V_{bad}(C)$.

    \begin{cla}\label{U^infty}
        The set $U^\infty$ is co-absorbable.
    \end{cla}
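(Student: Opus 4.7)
The plan is to prove by strong induction on $i$ that every vertex in $U_x^i$ is co-absorbable for each $x \in V_{bad}(C)$. In the base case $i=0$, we have $U_x^0 = \{x^+\}$; since $x \in V_{bad}(C)$, either $x$ is of $A$-type or $x$ is bad with respect to some $K \in \mathcal{F}_{AB}$, so Claim \ref{A-type} or Claim \ref{bad_set_co-abso} exhibits a cycle on $(V(C) \cup V(K)) \setminus \{x^+\}$; together with the remaining cycles of $F$ this gives a $2$-factor of $G - x^+$ with $\omega(F) - 1$ components.

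For the inductive step, I would assume every vertex in $\bigcup_{j < i} U_x^j$ is co-absorbable and let $u \in U_x^i$. By definition there exist $w \in U_x^{i-1}$ and the successor $u^\dagger$ of $u$ on $P_{[w,x]}$ with $u^\dagger \in V(C) \setminus V_{bad}(C)$ and $u^\dagger w \in E(G)$, and by the argument already appearing in the proof of Claim \ref{U_x^infty} the edge $uu^\dagger$ lies on $C$. Fix a cycle $D \in \mathcal{F}_{AB}$ with respect to which $x$ is of $A$-type or bad, and let $y \in B \cap V(D)$ with $xy \in E(G)$. Letting $u^*$ denote the second vertex of $P_{[u,x]}$, the deletion $P_{[u,x]} - u$ is a Hamilton path of $C - u$ from $u^*$ to $x$. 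I then form the cycle $\widetilde{C}$ by concatenating $P_{[u,x]} - u$ (from $u^*$ to $x$), the edge $xy$, the Hamilton path $y\overleftarrow{D}y^+$ of $D$, and the closing edge $y^+u^*$; replacing $C$ and $D$ in $F$ by $\widetilde{C}$ then produces a $2$-factor of $G - u$ with $\omega(F) - 1$ components, witnessing that $u$ is co-absorbable.

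The main obstacle is verifying $y^+u^* \in E(G)$. When $u^* \in V(C) \setminus V_{bad}(C)$, since $u^*$ is adjacent to $u \in U_x^\infty$ along $P_{[u,x]}$, Claim \ref{U_x^infty}(2) supplies this edge at once. The hard sub-cases are (i) $uu^*$ is a jump on $P_{[w,x]}$, which forces $u^* \in \bigcup_{j < i} U_x^j \subset U^\infty$ and hence $y^+u^* \notin E(G)$ by Claim \ref{U_x^infty}(1); and (ii) $u^* \in \{u^+, u^-\} \cap V_{bad}(C)$. To handle (i), I would rotate $P_{[u,x]} - u$ along an edge supplied by $2K_2$-freeness in order to re-endpoint the Hamilton path at a vertex of $V(C) \setminus V_{bad}(C)$ adjacent to some element of $U_x^\infty$, after which Claim \ref{U_x^infty}(2) again provides the bridge. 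To handle (ii), I would run a direct $2K_2$-free argument on the edges $uu^*$ and $yy^+$ in the spirit of the base case, if necessary replacing $(D,y)$ by a pair reflecting the $A$-type or bad structure now present at $u^*$.
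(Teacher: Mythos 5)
Your construction is the same as the paper's: delete the endpoint $u$ of the spanning path $P_{[u,x]}$ of $C$, bridge its second vertex $u^*$ to the successor $y^+$ of the anchor $y \in B \cap V(D)$ via Claim \ref{U_x^infty}(2), traverse all of $D$, and return through the edge $xy$. The gap is in your treatment of $u^*$. For your sub-case (i) you offer only an unspecified ``rotation,'' which is not a proof; worse, it is unclear what edge such a rotation could use, since by Claim \ref{U_x^infty}(1) every vertex of $\bigcup_{j<i}U_x^j$ is non-adjacent to both $y$ and $y^+$, so no bridge into $D$ is available from that set. The correct resolution is that sub-case (i) is vacuous, and you already have the tools to see this: apply $2K_2$-freeness to the disjoint edges $uu^*$ and $yy^+$. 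Claim \ref{U_x^infty}(1) gives $uy, uy^+ \notin E(G)$, so $u^*$ must be adjacent to $y$ or to $y^+$; but every vertex of $\bigcup_{j<i}U_x^j$ is adjacent to neither, so $u^* \notin \bigcup_{j<i}U_x^j$. Since $u \notin \bigcup_{j<i}U_x^j$ by the definition of $U_x^i$, and every edge of $P_{[u,x]}$ that is not an edge of $C$ has an end-vertex in $\bigcup_{j<i}U_x^j$, the edge $uu^*$ lies on $C$. This is exactly how the paper closes the argument.

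Two smaller points. In sub-case (ii) you do not need a fresh $2K_2$ argument: if $u^* \in V_{bad}(C)$ and $uu^* \in E(C)$, then $u$ is the successor or predecessor on $C$ of a vertex of $V_{bad}(C)$, hence co-absorbable directly by Claims \ref{A-type} and \ref{bad_set_co-abso}. And your strong induction hypothesis (co-absorbability of all lower levels $U_x^j$, $j<i$) is never actually invoked in your construction; the argument needs only Claim \ref{U_x^infty}, which is how the paper organizes it.
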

    \begin{proof}
        It suffices to show that $U_x^\infty$ is co-absorbable for a vertex $x \in V_{bad}(C)$. 
        Let $D \in \mathcal{F}_{AB}$ such that $x$ is bad or $A$-type w.r.t.\ $D$ and let $u \in B \cap V(D)$ such that $ux \in E(G)$. 
        Let $v \in U_x^\infty$. 
        If $v \in U_x^0$, then $v=x^+$ and so $v$ is co-absorbable by Claims \ref{A-type} and \ref{bad_set_co-abso}. 
        Thus, we assume that $v \in U_x^i$ for $i \geq 1$. 
        By the definition of $U_x^i$, there exists a spanning path $P_{[v,x]}$ of $C$ with end vertices $v$ and $x$. 
        By Claim \ref{U_x^infty}(1), we have $uv,uv^+ \notin E(G)$. 
        Let $y$ be the neighbor of $v$ on $P_{[v,x]}$. 
        Considering two edges $vy$ and $uu^+$, we have $yu \in E(G)$ or $yu^+ \in E(G)$. 
        Since $U_x^j \subset \overline{V}_C(uu^+)$ for every $j \leq i-1$, we have $y \notin \bigcup_{j=0}^{i-1}U_x^j$. 
        Furthermore, $v \notin \bigcup_{j=0}^{i-1}U_x^j$ by the definition of $U_x^i$. 
        Thus, $vy$ is an edge on $C$ since any edge on $P_{[v,x]}$ which is not an edge of $C$ has one end-vertex in $\bigcup_{j=0}^{i-1}U_x^j$. 
        If $y \in V_{bad}(C)$, then $v$ is co-absorbable by Claims \ref{A-type} and \ref{bad_set_co-abso}. 
        So we may assume that $y \notin V_{bad}(C)$. 
        Now we have $yu^+ \in E(G)$ by Claim \ref{U_x^infty}(2). 
        Thus, considering a cycle 
        \[y P_{[v,x]}xu \overleftarrow{D}u^+y,\] 
        we find that $v$ is co-absorbable.
    \end{proof}

    Applying Claim \ref{co-abso} to $U^\infty \cup B_0$, we find that the set $U^\infty \cup B_0$ is independent in $G$. 
    In particular, we have $U^\infty \cap V_C(U^\infty)=\emptyset$ since $U^\infty$ is independent in $G$.

    \begin{cla}\label{|U^infty|}
        $|V_C(U^\infty)| \leq 2|U^\infty|$.
    \end{cla}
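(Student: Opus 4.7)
The plan is to establish the stronger inclusion
\[
V_C(U^\infty) \subseteq \{u^- : u \in U^\infty\} \cup \{u^+ : u \in U^\infty\},
\]
from which $|V_C(U^\infty)| \leq 2|U^\infty|$ is immediate. Since $U^\infty$ is independent in $G$ (as already noted before the claim), every $w \in V_C(U^\infty)$ lies outside $U^\infty$, so the task reduces to showing that each such $w$ has $w^- \in U^\infty$ or $w^+ \in U^\infty$.

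Fix $w \in V_C(U^\infty)$. If $w \in V_{bad}(C)$, then $w^+ \in U_w^0 \subseteq U^\infty$ and we are done. Otherwise choose $v \in U^\infty$ with $wv \in E(G)$, and write $v \in U_x^i$ for some $x \in V_{bad}(C)$. For $i = 0$ we have $v = x^+$, and the defining condition of $U_x^1$ directly yields $w^- \in U_x^1 \subseteq U^\infty$ (with the edge case $w^- = x^+$ itself lying in $U^\infty$). For $i \geq 1$, consider the Hamilton path $P_{[v,x]}$ of $C$ and let $u$ be the predecessor of $w$ on it. Because $u^\dagger = w \notin V_{bad}(C)$ and $u^\dagger v \in E(G)$, the defining recursion for $U_x^{i+1}$ forces $u \in \bigcup_{j \leq i+1} U_x^j \subseteq U^\infty$. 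If $uw \in E(C)$ then $u \in \{w^-, w^+\}$ and we are done; the hard case is when $uw$ is a chord, i.e.\ a ``jump'' of the Hamilton path $P_{[v,x]}$.

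To handle the jump case I would run a subsidiary induction on $i$ proving: for every jump edge $a \to b$ of $P_{[v,x]}$ (oriented from $v$ to $x$, with $a$ the predecessor of $b$), each of $a$ and $b$ either lies in $U^\infty$ or has a $C$-neighbor in $U^\infty$. The base $i = 1$ is the single jump $x^+ \to x_1^+$ with $a = x^+ \in U_x^0$ and $(x_1^+)^- = x_1 \in U_x^1$. For the inductive step, decompose
\[
P_{[v,x]} = v\,P_{[v',x]}\,v'\,v^\dagger\,P_{[v',x]}\,x
\]
and classify the jumps into (a) the new jump $v' \to v^\dagger$, (b) forward jumps inherited from $P_{[v',x]}$ in the second sub-path, and (c) reversed jumps in the first sub-path. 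Case (b) is immediate by induction; case (c) simply swaps the roles of source and destination of a jump of $P_{[v',x]}$, and since the claimed property is symmetric in these roles, it is inherited. For case (a), the source is $v' \in U^\infty$; the decisive structural point is that $v$ is not a jump source of $P_{[v',x]}$ at all, because the forward sources of $P_{[v',x]}$ lie in strictly earlier levels $\bigcup_{\ell \leq i-2} U_x^\ell$ (disjoint from $U_x^i$), while the reversed sources have the form $x_\ell^{\pm}$ on $C$ for some ancestor $x_\ell \in U^\infty$, so equaling $v \in U^\infty$ would place two independent-set vertices at $C$-distance one, a contradiction. Hence $v^\dagger \in \{v^-, v^+\}$ on $C$, and the destination $v^\dagger$ has its $C$-neighbor $v$ in $U^\infty$. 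Returning to the main argument, the jump destination $w$ is either in $U^\infty$ (excluded by $w \in V_C(U^\infty)$) or has $w^- \in U^\infty$ or $w^+ \in U^\infty$, as required.

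The principal obstacle is the inductive bookkeeping for the reversed jumps in case (c), together with the verification that $v \in U_x^i$ cannot appear as a jump source of $P_{[v',x]}$; once that structural point is isolated, the rest of the proof is a straightforward case analysis.
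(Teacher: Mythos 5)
Your proof is correct, and its skeleton matches the paper's: both arguments reduce the claim to showing that every vertex of $V_C(U^\infty)$ is a neighbour \emph{on the cycle $C$} of some vertex of $U^\infty$, so that $V_C(U^\infty) \subseteq N_C(U^\infty)$ and $|N_C(U^\infty)| \leq 2|U^\infty|$. Where you genuinely diverge is in how the chord case is eliminated. The paper sidesteps it with a minimality trick: it takes $i$ minimal with $y \in V_C(U_x^i)$, so the predecessor $v$ of $y$ on $P_{[w,x]}$ cannot lie in $\bigcup_{j<i}U_x^j$ (otherwise $y$ would already lie in $V_C(U_x^j)$ for some $j<i$), and since every chord of $P_{[w,x]}$ has an end in $\bigcup_{j<i}U_x^j$ while $y \notin U_x^\infty$, the edge $vy$ is forced to be an edge of $C$ --- no chord case ever arises. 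You instead attack the chord case head-on with a subsidiary induction showing that both ends of every chord of every $P_{[v,x]}$ lie in $U^\infty$ or have a $C$-neighbour in $U^\infty$; this works, but it costs the extra bookkeeping you acknowledge. One remark on that bookkeeping: your case (a) justification that $v$ cannot be a jump source of $P_{[v',x]}$ (via ``forward'' versus ``reversed'' sources) is muddled as stated, but it is also unnecessary --- if $vv^\dagger$ \emph{is} a chord of $P_{[v',x]}$, your own induction hypothesis already applies to that chord and yields the property for $v^\dagger$; alternatively, the structural fact that every chord of $P_{[v',x]}$ has an end in $\bigcup_{j\leq i-2}U_x^j \subseteq U^\infty$, together with $v \in U_x^i$ and the independence of $U^\infty$, forces $vv^\dagger \in E(C)$ in one line. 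In short, the paper's minimality choice buys a much shorter argument, while your route proves a somewhat stronger structural statement about the paths $P_{[v,x]}$ at the price of the jump induction.
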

    \begin{proof}
        First, we show that for each vertex $y \in V_C(U^\infty)$, there exists $v \in U^\infty$ such that $vy \in E(C)$. 
        Let $y \in V_C(U^\infty)$. If $y \in V_{bad}(C)$, then we have $y^+ \in U^\infty$. 
        So we may assume that $y \notin V_{bad}(C)$. 
        Let $x \in V_{bad}(C)$ be a vertex satisfying $y \in V_C(U_x^\infty)$. 
        If $y \in V_C(U_x^0)$, then $y^- \in U_x^1$ because $yx^+ \in E(G)$ and $y \notin V_{bad}(C)$. 
        So assume that $i \geq 1$ and $y \in V_C(U_x^i) \setminus V_C(\bigcup_{j=0}^{i-1}U_x^j)$. 
        Let $w \in U_x^i$ be a vertex with $wy \in E(G)$, and let $v$ be the predecessor of $y$ on $P_{[w,x]}$. 
        Since $V_C(U^\infty) \cap U^\infty=\emptyset$, we have $y \notin U_x^\infty$. 
        By the assumption that $y \in V_C(U_x^i) \setminus V_C(\bigcup_{j=0}^{i-1}U_x^j)$, we have $v \notin \bigcup_{j=0}^{i-1}U_x^j$. 
        Since any edge on $P_{[w,x]}$ which is not an edge of $C$ has one end-vertex in $\bigcup_{j=0}^{i-1}U_x^j$, we find that $yv$ is an edge on $C$. 
        Since $y \notin V_{bad}(C)$ and $wy \in E(G)$, we have $v \in U_x^i$ or $v \in U_x^{i+1}$.

        By the above discussion, we find that $V_C(U^\infty)=N_C(U^\infty)$. 
        Since $|N_C(U^\infty)| \leq 2|U^\infty|$, we obtain Claim \ref{|U^infty|}.
    \end{proof}

    Let $W=A_0 \cup V_C(U^\infty)$. 
    Since $V_{bad}(C) \subset V_C(U^\infty)$, every vertex in $U^\infty \cup B_0$ is an isolated vertex in $G-W$. 
    Hence $\omega(G-W) \geq |U^\infty|+|B_0| \geq |B_0| \geq 2$ and
    \[\frac{|W|}{\omega(G-W)} \leq \frac{|A_0|+|V_C(U^\infty)|}{|B_0|+|U^\infty|} \leq \frac{|A_0|+2|U^\infty|}{|B_0|+|U^\infty|} < 2\]
    by Claim \ref{|U^infty|}, which contradicts the toughness condition of $G$.

    \begin{case}
        $|\overline{\mathcal{F}}_{AB}|=2$.
    \end{case}

    Let $C$ and $D$ be the cycles in $\overline{\mathcal{F}}_{AB}$, and let $uu^+ \in E(C)$ and $vv^+ \in E(D)$ be fixed $B$-type edges. 
    We then define
    \[X_C=\overline{V}_C(vv^+), \ Y_C=V_C(vv^+), \ X_D=\overline{V}_D(uu^+) \text{ and }Y_D=V_D(uu^+).\] 
    Note that by Observation \ref{prop:2K_2-free}, $X_C$ and $X_D$ are independent sets in $G$.
    Also, by Claim \ref{B_edge}, $C$ and $D$ are $X_CY_C$-alternating and $X_DY_D$-alternating, respectively.
    In particular, we have $|X_C|=|Y_C|=\frac{1}{2}|V(C)|$, $|X_D|=|Y_D|=\frac{1}{2}|V(D)|$ and $|A_0|=|B_0|=\frac{1}{2}(n-|V(C)|-|V(D)|)$.

    \begin{cla}\label{XY-alt_lem}
        Let $H \in \overline{\mathcal{F}}_{AB}$. 
        If $V_{bad}(H) \cap X_H \neq \emptyset$, then $Y_H$ is co-absorbable.
    \end{cla}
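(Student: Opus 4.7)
The plan is to prove, in the case $H = C$ (the case $H = D$ is symmetric), that every $y \in Y_C$ is co-absorbable. Fix $x \in V_{bad}(C) \cap X_C$. Since $C$ is $X_C Y_C$-alternating and $x \in X_C$, both $x^+$ and $x^-$ lie in $Y_C$, and Claim \ref{A-type} (if $x \in A$) or Claim \ref{bad_set_co-abso} (if $x$ is bad w.r.t.\ some $K \in \mathcal{F}_{AB}$) shows they are co-absorbable, settling the conclusion for $y \in \{x^+, x^-\}$.

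For the remaining $y \in Y_C$, the plan is to merge $C$ and $D$ into a single cycle on $(V(C) \cup V(D)) \setminus \{y\}$; together with the remaining cycles in $\mathcal{F} \setminus \{C, D\}$, this produces a 2-factor of $G - y$ having exactly $|\mathcal{F}| - 1$ components. Since $y \in V_C(vv^+)$ we may assume $yv \in E(G)$. The key obstacle is that the two endpoints $y^+, y^-$ of the natural Hamilton path $y^+ \overrightarrow{C} y^-$ on $V(C) \setminus \{y\}$ both lie in $X_C$, so by definition have no edges to $v$ or $v^+$, preventing a direct concatenation with $v \overleftarrow{D} v^+$.

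To circumvent this, I would use the bad vertex $x$ to produce a chord linking $C$ to $D$ (or a chord inside $V(C)$). Applying Observation \ref{prop:2K_2-free} to the edge $yv$ paired with $xx^+$, and noting that $xv \notin E(G)$ since $x \in X_C$, yields that at least one of $vx^+$, $yx$, $yx^+$ is an edge of $G$; an analogous trichotomy holds with $xx^-$ in place of $xx^+$. Each resulting chord provides a rerouting of the Hamilton path on $V(C) \setminus \{y\}$ whose new endpoints can be linked to $V(D)$ via $yv$ and this new chord. Claim \ref{edge_lemma}, applied to crossings between $C$ and $D$, and Claim \ref{B-type-co-abso}, applied to the co-absorbable vertices $x^+, x^-$ together with the $B$-type vertex $v$, further constrain the adjacencies and make the case analysis tractable.

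The main obstacle I anticipate is the subcase in which $x$ has no edge to $D$ at all, i.e., $x$ is bad w.r.t.\ some $K \in \mathcal{F}_{AB} \setminus \{D\}$ but not $A$-type w.r.t.\ $D$. There one must merge $C$ with $K$ (using $x$'s guaranteed chords to every $B$-type vertex of $K$) rather than with $D$; producing a Hamilton cycle on $(V(C) \cup V(K)) \setminus \{y\}$ when $y$ lies arbitrarily far from $x$ on $C$ is delicate and likely requires iterating the expansion used to define $U_x^\infty$ in Case 1.
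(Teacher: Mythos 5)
Your opening move coincides with the paper's base case: for $x \in V_{bad}(H)\cap X_H$, the two neighbours $x^+,x^-\in Y_H$ are co-absorbable by Claims \ref{A-type} and \ref{bad_set_co-abso} (this is the paper's statement $\mathcal{P}(k-1)$). The gap is in everything after that. The trichotomy you extract from the edge pair $yv$, $xx^+$ says that one of $yx$, $yx^+$, $vx^+$ lies in $E(G)$. The first two are chords incident to $y$, the very vertex you intend to delete, so they cannot be used in a $2$-factor of $G-y$ and do not reroute any Hamilton path of $C-y$. The third is a single edge between $C$ and $D$; to merge the path $y^+\overrightarrow{C}y^-$ with $D$ into one cycle you need two suitably placed independent edges between the two cycles, and one edge is not enough. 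So none of the three cases closes the argument, and when $y$ is far from $x$ on $C$ no bounded number of applications of $2K_2$-freeness to edges near $x$ will produce the required cycle on $(V(C)\cup V(D))\setminus\{y\}$. Your final paragraph concedes that an iteration is ``likely required'' in one subcase, but the iteration is in fact required in every case, and the proposal does not supply it.

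The missing mechanism is precisely the induction the paper runs. Writing $Y_H=\{y_1,\dots,y_k\}$ with $y_1=x^+$ and $y_k=x^-$, the paper maintains, for a shrinking interval $[s,t]$, two Hamilton paths $P_{s,t}$ and $Q_{s,t}$ of $H$ from $x$ to $y_s$ and from $x$ to $y_t$, together with the knowledge that $\{y_1,\dots,y_s\}$ and $\{y_t,\dots,y_k\}$ are already co-absorbable. The inductive step applies $2K_2$-freeness not to $yv$ and $xx^+$ but to $y_sy_s^+$ and $y_ty_t^-$: since $y_s,y_t$ are co-absorbable they are nonadjacent by Claim \ref{co-abso}, and $y_s^+,y_t^-\in X_H$ are nonadjacent because $X_H$ is independent, so one of the chords $y_sy_t^-$ or $y_s^+y_t$ must be present. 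That chord simultaneously reroutes the Hamilton path and exposes the next vertex of $Y_H$, whose co-absorbability is then certified by gluing the rerouted path to the cycle $K$ witnessing that $x$ is bad or of $A$-type (this also disposes of your worried subcase $E_G(x,D)=\emptyset$, since the merge is always with $K$, not necessarily with $D$). The engine of the proof is the already-established co-absorbability of the interval endpoints, not the edge $yv$; without that inductive bookkeeping the claim is not proved.
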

    \begin{proof}
        Suppose that $V_{bad}(H) \cap X_H \neq \emptyset$, and take $x \in V_{bad}(H) \cap X_H$. 
        Let $k=|X_H|=|Y_H|$ and
        \[y_1=x^+,y_2=y_1^{++},y_3=y_2^{++},\ldots,y_k=y_{k-1}^{++}=x^-,\]
        so that $Y_H=\{y_1,y_2, \ldots, y_k\}$. 
        To prove Claim \ref{XY-alt_lem}, we show that for each $i \in \{1,\ldots,k-1\}$, the following statement $\mathcal{P}(i)$ is true by the induction on $k-i$.

        \noindent $\bm{\mathcal{P}(i)}$: There exists $s,t \in \{1,2,\ldots,k\}$ with $t-s=i$ such that each of the following holds.
        \begin{enumerate}[$(1)$]
            \item $\{y_1,y_2,\ldots,y_s\}$ and $\{y_t,y_{t+1},\ldots,y_k\}$ are co-absorbable;
            \item there exists an $xy_s$-path $P_{s,t}$ in $G$ such that $P_{s,t}$ consists of all vertices on $H$ and $y_s\overrightarrow{H}y_t$ is a subpath of $P_{s,t}$;
            \item there exists an $xy_t$-path $Q_{s,t}$ in $G$ such that $Q_{s,t}$ consists of all vertices on $H$ and $y_s \overrightarrow{H}y_t$ is a subpath of $Q_{s,t}$.
        \end{enumerate}

        First, $\mathcal{P}(k-1)$ is true by $s=1$ and $t=k$. 
        Indeed, $y_1=x^+$ and $y_k=x^-$ are co-absorbable by Claim \ref{bad_set_co-abso}. 
        Moreover, $P_{1,k}=x\overleftarrow{H}y_1$ and $Q_{1,k}=x \overrightarrow{H}y_k$ are the required paths. 

        Now assume that $\mathcal{P}(i+1)$ is true for $1 \leq i <k-1$. 
        By the induction hypothesis, there exist two integers $s^*,t^* \in \{1,2,\ldots,k\}$ and two paths $P_{s^*,t^*}$, $Q_{s^*,t^*}$ satisfying the conditions of $\mathcal{P}(i+1)$. 
        To show $\mathcal{P}(i)$, we focus on two edges $y_{s^*}y_{s^*}^+$ and $y_{t^*}y_{t^*}^-$. 
        As $t^*-s^*=i+1 \geq 2$, we have $y_{s^*}^+ \neq y_{t^*}^-$. 
        Since $y_{s^*}$ and $y_{t^*}$ are co-absorbable, $y_{s^*}y_{t^*} \notin E(G)$. 
        Moreover, we have $y_{s^*}^+y_{t^*}^- \notin E(G)$ since $y_{s^*}^+,y_{t^*}^- \in X_H$ (recall that $X_H$ is independent in $G$). 
        Thus, $y_{s^*}y_{t^*}^- \in E(G)$ or $y_{s^*}^+y_{t^*} \in E(G)$ since $G$ is $2K_2$-free.

        If $y_{s^*}y_{t^*}^- \in E(G)$, we take 
        \[s=s^*, \ t=t^*-1, \ P_{s,t}=P_{s^*,t^*} \text{ and }Q_{s,t}=xP_{s^*,t^*}y_{t^*}^-y_{s^*}P_{s^*,t^*}y_t.\]
        Then, we can see that $y_s\overrightarrow{H}y_t$ is a subpath of both $P_{s,t}$ and $Q_{s,t}$. 
        Moreover, $\{y_1,\ldots, y_s\}$ and $\{y_{t+1}, \ldots, y_k\}$ are co-absorbable by the induction hypothesis. 
        We now prove that $y_t$ is co-absorbable. 
        If $x$ is of $A$-type, then we take a cycle $K \in \mathcal{F} \setminus \{H\}$ and a vertex $z \in V(K)$ such that $xz,xz^+ \in E(G)$. 
        If $y_tz \in E(G)$ or $y_tz^+ \in E(G)$, then
        \begin{eqnarray*}
            \left \{
            \begin{aligned}
                &xQ_{s,t}y_tz \overleftarrow{K}z^+x &\text{ if }& y_tz \in E(G)\\
                &xQ_{s,t}y_tz^+ \overrightarrow{K}zx &\text{ if }& y_tz^+ \in E(G)\\
            \end{aligned}
            \right.
        \end{eqnarray*}
        is a cycle of $G$ on $V(H) \cup V(K)$, a contradiction to the minimality of $|\mathcal{F}|$. 
        Hence we have $y_tz, y_tz^+ \notin E(G)$. 
        Considering the edges $y_t^-y_t$ and $zz^+$, we have $y_t^-z \in E(G)$ or $y_t^-z^+ \in E(G)$ since $G$ is $2K_2$-free. 
        Then,
        \begin{eqnarray*}
            \left \{
            \begin{aligned}
                &xQ_{s,t}y_t^-z \overleftarrow{K}z^+x &\text{ if }& y_t^-z \in E(G)\\
                &xQ_{s,t}y_t^-z^+ \overrightarrow{K}zx &\text{ if }& y_t^-z^+ \in E(G)\\
            \end{aligned}
            \right.
        \end{eqnarray*}
        is a cycle of $G$ on $(V(H) \cup V(K)) \setminus \{y_t\}$. 
        Thus, $y_t$ is co-absorbable. 
        If $x$ is not of $A$-type, then we take a cycle $K \in \mathcal{F}_{AB}$ such that $x$ is bad w.r.t.\ $K$. 
        Let $z \in V(K) \cap B$. 
        If $y_tz \in E(G)$, then $y_t$ is in $V_{bad}(H)$ by Claim \ref{bad_set_lemma}, and so $y_t^+=y_{t^*}^-$ is co-absorbable by Claim \ref{bad_set_co-abso}. 
        However, this contradicts Claim \ref{co-abso} because $y_{t^*}$ is co-absorbable. 
        Thus, we have $y_tz \notin E(G)$. 
        If $y_tz^+ \in E(G)$, then 
        \[xQ_{s,t}y_tz^+ \overrightarrow{K}zx\]
        is a cycle of $G$ on $V(H) \cup V(K)$, a contradiction to the minimality of $\mathcal{F}$. 
        Thus, we have $y_tz^+ \notin E(G)$. 
        Considering the edges $y_t^-y_t$ and $zz^+$, we have $y_t^-z \in E(G)$ or $y_t^-z^+ \in E(G)$ since $G$ is $2K_2$-free. 
        If $y_t^-z \in E(G)$, then we have $y_t^- \in V_{bad}(H)$ by Claim \ref{bad_set_lemma}. 
        Thus, $y_t$ is co-absorbable by Claim \ref{bad_set_co-abso}. 
        If $y_t^-z^+ \in E(G)$, then 
        \[xQ_{s,t}y_t^-z^+ \overrightarrow{K}zx\] 
        is a cycle of $G$ on $(V(H) \cup V(K)) \setminus \{y_t\}$. 
        Thus, we find that $y_t$ is co-absorbable.

        Next suppose that $y_{s^*}^+y_{t^*} \in E(G)$. 
        In this case, we take
        \[s=s^*+1, \ t=t^*, \ P_{s,t}=xQ_{s^*,t^*}y_{s^*}^+y_{t^*}Q_{s^*,t^*}y_s\text{ and }Q_{s,t}=Q_{s^*,t^*}.\]
        We can similarly show that they are required ones in $\mathcal{P}(i)$. 
        We finally conclude that $\mathcal{P}(1)$ is true. 
        In particular, we find that $Y_H$ is co-absorbable.
    \end{proof}

    \begin{cla}\label{V_bad and X are disjoint}
        $X_C \cap V_{bad}(C)=X_D \cap V_{bad}(D)=\emptyset$. Thus, $X_C \cup B_0$ and $X_D \cup B_0$ are independent sets in $G$.
    \end{cla}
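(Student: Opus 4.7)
My plan is to prove the first assertion, $X_C \cap V_{bad}(C) = \emptyset$, by contradiction; the analogous $X_D \cap V_{bad}(D) = \emptyset$ will follow by symmetry, and the independence statements will then be immediate from Claim \ref{bad_set_lemma}. Suppose $x \in X_C \cap V_{bad}(C)$. By Claim \ref{XY-alt_lem}, $Y_C$ is co-absorbable; combined with co-absorbability of $B_0 = A_0^+$ (which follows from Claim \ref{A-type} applied to $A_0 \subseteq A$), Claim \ref{co-abso} implies that $Y_C \cup B_0$ is independent in $G$.

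The first structural observation I would make is that no vertex of $X_C$ can be co-absorbable: any such $z$ would be adjacent on $C$ to $z^+ \in Y_C$, and both being co-absorbable would violate the independence of the co-absorbable set. Combined with Claims \ref{A-type} and \ref{bad_set_co-abso} (which give that $z^+$ and $z^-$ are co-absorbable whenever $z \in V_{bad}(C)$), this forces $V_{bad}(C) \cap Y_C = \emptyset$, so $V_{bad}(C) \subseteq X_C$. Furthermore, if $X_D \cap V_{bad}(D)$ were also nonempty, then Claim \ref{XY-alt_lem} applied to $D$ would give $Y_D$ co-absorbable, making $Y_C \cup Y_D \cup B_0$ an independent set of size $n/2 > n/3 \geq \alpha(G)$, a contradiction; hence $V_{bad}(D) \subseteq Y_D$.

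For the final contradiction, I would adapt the $U^\infty$ toughness argument from Case 1 to the two-bad-cycle setting. Define $U^\infty_C = \bigcup_{z \in V_{bad}(C)} U^\infty_z$ exactly as in Case 1 (so that $U^\infty_C$ is co-absorbable and $|V_C(U^\infty_C)| \leq 2|U^\infty_C|$ by the natural analogues of Claims \ref{U^infty} and \ref{|U^infty|}), and symmetrically define $U^\infty_D$. With the separator $W = A_0 \cup V_C(U^\infty_C) \cup V_D(U^\infty_D)$, I would check that $U^\infty_C \cup U^\infty_D \cup B_0$ consists of isolated vertices in $G - W$: the containment $V_{bad}(C) \subseteq V_C(U^\infty_C)$ with Claim \ref{bad_set_lemma} restricts $B_0$'s $V(C)$-neighbors into $W$, and the asymmetric placements $V_{bad}(C) \subseteq X_C$ and $V_{bad}(D) \subseteq Y_D$ are what prevent cross-cycle edges from $U^\infty_C$ into $V(D) \setminus V_D(U^\infty_D)$. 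Then $\omega(G-W) \geq |U^\infty_C| + |U^\infty_D| + |B_0|$ while $|W| \leq |B_0| + 2|U^\infty_C| + 2|U^\infty_D|$, and the toughness inequality $|W| \geq 2\,\omega(G-W)$ reduces to $|B_0| \leq 0$, contradicting $|B_0| \geq 1$ (the edge case $|\mathcal{F}|=2$, $\mathcal{F}_{AB} = \emptyset$ is handled separately via direct $2K_2$-free analysis of the consecutive witnesses of $x$'s $A$-typeness, using Claim \ref{edge_lemma} to force one of $v$, $v^+$ to be $A$-type).

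The main obstacle I anticipate is the verification of cross-cycle isolation: that $U^\infty_C$ has no edges into $V(D) \setminus V_D(U^\infty_D)$, and symmetrically. This hinges on the asymmetric restrictions $V_{bad}(C) \subseteq X_C$ versus $V_{bad}(D) \subseteq Y_D$ and likely requires an additional careful trace through the $U^i_z$ definitions together with applications of Claims \ref{edge_lemma} and \ref{bad_set_lemma} to rule out the offending adjacencies between the two bad cycles.
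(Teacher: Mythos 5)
Your opening is sound and in fact reproduces the paper's first step in a slightly different order: assuming $x\in X_C\cap V_{bad}(C)$, Claim \ref{XY-alt_lem} makes $Y_C$ co-absorbable, and the $\frac{n}{2}$-independent-set count ($Y_C\cup Y_D\cup B_0$) forces $X_D\cap V_{bad}(D)=\emptyset$. At that point you are in exactly the configuration the paper resolves (with the roles of $C$ and $D$ swapped), and it does so in a few lines: $X_D\cup B_0$ is independent by Claim \ref{bad_set_lemma}, so to avoid an independent set $X_D\cup Y_C\cup B_0$ of size $\frac{n}{2}$ there must be an edge from some $x'\in X_D$ (necessarily of $B$-type) to the co-absorbable set $Y_C$; iterating Claim \ref{B-type-co-abso} then makes $x'$ adjacent to every vertex of $Y_C=V_C(vv^+)$, hence to $u$ or $u^+$, contradicting $x'\in X_D=\overline{V}_D(uu^+)$. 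You instead head into an adapted $U^\infty$ toughness argument, and that is where the proposal breaks.

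The cross-cycle isolation you flag as ``the main obstacle'' is not a technicality to be traced through --- it is false. Since $V_{bad}(C)\subseteq X_C$ in your scenario, the set $U^\infty_C$ contains $x^+\in Y_C=V_C(vv^+)$, so by the very definition of $Y_C$ the vertex $x^+$ has a neighbor in $\{v,v^+\}\subseteq V(D)$. Your cut $W$ meets $V(D)$ only in $V_D(U^\infty_D)$, and nothing places $v$ or $v^+$ there; indeed $V_{bad}(D)$ may well be empty at this stage (you only know $V_{bad}(D)\subseteq Y_D$), in which case $U^\infty_D=\emptyset$, $W\cap V(D)=\emptyset$, and $x^+$ is joined in $G-W$ to a large piece of $D$, collapsing the component count $\omega(G-W)\geq |U^\infty_C|+|U^\infty_D|+|B_0|$. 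A second, independent defect is that the construction of $U_z^\infty$ and the proofs of Claims \ref{U_x^infty} and \ref{U^infty} require a cycle $K\in\mathcal{F}_{AB}$ with respect to which $z$ is bad or of $A$-type, supplying a co-absorbable neighbor $u\in B\cap V(K)$; in Case 2 a vertex of $V_{bad}(C)$ may be of $A$-type only with respect to $D\in\overline{\mathcal{F}}_{AB}$, so the machinery does not transfer, and your ``handled separately'' remark addresses only the subcase $\mathcal{F}_{AB}=\emptyset$. So the proposal establishes the first half of the claim but leaves the decisive contradiction unproven, where the paper's own finish via Claim \ref{B-type-co-abso} is both short and necessary.
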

    \begin{proof}
        If $X_C \cap V_{bad}(C)$ and $X_D \cap V_{bad}(D)$ are not empty, then $Y_C \cup Y_D \cup B_0$ is co-absorbable by Claims \ref{A-type} and \ref{XY-alt_lem}. 
        However, by Claim \ref{co-abso}, this is an independent set in $G$ of order $\frac{n}{2}$, which contradicts $\alpha(G) \leq \frac{n}{3}$. 
        Therefore, we may assume without loss of generality, that $X_C \cap V_{bad}(C)=\emptyset$. 
        By Claim \ref{bad_set_lemma}, $X_C \cup B_0$ is independent in $G$.

        Suppose that $X_D \cap V_{bad}(D) \neq \emptyset$. Then $Y_D \cup B_0$ is co-absorbable by Claims \ref{A-type} and \ref{XY-alt_lem}. If $E_G(X_C,Y_D)=\emptyset$, then $X_C \cup Y_D \cup B_0$ is an independent set in $G$ of order $\frac{n}{2}$, contrary to $\alpha(G) \leq \frac{n}{3}$. 
        Hence $E_G(X_C,Y_D) \neq \emptyset$. 
        Let $x \in X_C$ and $y \in Y_D$ be two vertices with $xy \in E(G)$. 
        Since $X_C \cap A=\emptyset$, $x$ is of $B$-type. 
        Thus, we have $xy^{++} \in E(G)$ by Claim \ref{B-type-co-abso}. 
        By the repetition of this argument, we find that $x$ is adjacent to all vertices in $Y_D$. 
        However, since $Y_D \cap \{v,v^+\} \neq \emptyset$, $x$ is adjacent to $v$ or $v^+$, contrary to the definition of $X_C$. Thus, we find that $X_D \cap V_{bad}(D)$ is empty.
    \end{proof}

    \begin{cla}\label{X is adjacent to B-edge}
        There exist $B$-type edges $u_0u_0^+ \in E(C)$ and $v_0v_0^+ \in E(D)$ such that 
        \[V_C(v_0v_0^+)=X_C, \ \overline{V}_C(v_0v_0^+)=Y_C, \ V_D(u_0u_0^+)=X_D \text{ and } \overline{V}_D(u_0u_0^+)=Y_D.\]
    \end{cla}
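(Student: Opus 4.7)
By symmetry it is enough to produce $v_0v_0^+$; the argument for $u_0u_0^+$ will be identical. I would argue by contradiction and suppose that no $B$-type edge of $D$ has $V_C$-neighborhood equal to $X_C$. For any such edge $e$, Claim \ref{B_edge} gives that $C$ is $V_C(e)\overline{V}_C(e)$-alternating, and comparing with the $X_CY_C$-alternation of $C$ forces $V_C(e)\in\{X_C,Y_C\}$; our assumption then yields $V_C(e)=Y_C$. Consequently every vertex of $D$ incident to some $B$-type edge of $D$ has all its $C$-neighbors in $Y_C$.

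Next I would split $V(D)\cap B$ into $D^{\mathrm{run}}$ (endpoints of some $B$-type edge of $D$) and $D^{\mathrm{iso}}$ (the rest). A vertex of $Y_D\cap B$ is flanked on $D$ by two vertices of $X_D\subseteq B$ (using Claim \ref{V_bad and X are disjoint}), hence belongs to $D^{\mathrm{run}}$, so $D^{\mathrm{iso}}\subseteq X_D$. Writing $X_D^{\mathrm{iso}}:=D^{\mathrm{iso}}$ and $X_D^{\mathrm{run}}:=X_D\setminus X_D^{\mathrm{iso}}$, the previous paragraph says that no edge of $G$ runs between $X_C$ and $D^{\mathrm{run}}$.

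I would then set $W:=Y_C\cup Y_D\cup A_0$, so $|W|=n/2$. By Observation \ref{prop:2K_2-free} and Claim \ref{V_bad and X are disjoint}, the only possible edges of $G-W$ lie between $X_C$ and $X_D^{\mathrm{iso}}$. If this bipartite subgraph has no edge, then $X_C\cup X_D\cup B_0$ is an independent set of size $n/2>n/3\geq\alpha(G)$, a contradiction. Otherwise, since $G$ is $2K_2$-free and both sides are independent, any two disjoint edges of the bipartite subgraph would induce a $2K_2$; hence its non-isolated vertices form a single connected component. Writing $k$ for the number of non-isolated vertices, one counts $\omega(G-W)=n/2-k+1$, and the 2-toughness of $G$ forces $k\geq n/4+1$.

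The hard part, I expect, will be turning this lower bound on $k$ into a contradiction. My plan is to invoke Claim \ref{A-type}: each vertex of $X_D^{\mathrm{iso}}$ is the successor on $D$ of an $A$-type vertex and is therefore co-absorbable, as is each vertex of $B_0$; thus $X_D^{\mathrm{iso}}\cup B_0$ is co-absorbable and, by Claim \ref{co-abso}, independent of size at most $n/3$. Combining this with the independence of $X_C\cup X_D^{\mathrm{run}}\cup B_0$ (which forces $|X_D^{\mathrm{iso}}|\geq n/6$), the lower bound $|V(C)|,|V(D)|\geq n/3+2$ from Claim \ref{cyc_cont_B_edge}(2), and if necessary a K\"onig-style matching bound in the $2K_2$-free bipartite subgraph, should yield the sought contradiction.
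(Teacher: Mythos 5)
Your setup through the toughness computation is sound and genuinely different from the paper's route (the paper instead takes any edge $u_0v_0$ between $X_C$ and $X_D$ and walks along $C$ via Claim~\ref{B-type-co-abso} until it lands on a $B$-type edge, then applies Claim~\ref{B_edge}). But your final step has a real gap: the bound $k\geq n/4+1$ is not in tension with the independence bounds you list. Indeed $|X_C|\geq n/6+1$ and $|X_D^{\mathrm{iso}}|\geq n/6$ give $|X_C|+|X_D^{\mathrm{iso}}|\geq n/3+1$, which comfortably accommodates $k\geq n/4+1$; and the constraint $|X_C|+|X_D^{\mathrm{run}}|+|B_0|\leq n/3$ only yields $|B_0|+|X_D^{\mathrm{run}}|\leq n/6-1$, which is \emph{stronger} than what $k\geq n/4+1$ would require, not contradicted by it. A K\"onig/chain-graph bound does not rescue this: a $2K_2$-free bipartite graph on $X_C\cup X_D^{\mathrm{iso}}$ with a dominating vertex on each side is perfectly consistent with all your inequalities. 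So the counting plan, as stated, does not close.

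The missing idea is Claim~\ref{B-type-co-abso}. Once you have a single edge $xy$ with $x\in X_C$ and $y\in X_D^{\mathrm{iso}}$, note that $x$ is of $B$-type (as $X_C\cap A=\emptyset$) and $y$ is co-absorbable (you observed it is the successor of an $A$-type vertex), so $xy^{++}\in E(G)$. The vertex $y^{++}$ lies in $X_D$; it cannot lie in $X_D^{\mathrm{run}}$ because you have already shown $E_G(X_C,D^{\mathrm{run}})=\emptyset$, so $y^{++}\in X_D^{\mathrm{iso}}$ and the step repeats. Iterating around $D$ shows $X_D=X_D^{\mathrm{iso}}$, i.e.\ $Y_D\subseteq A$, making $D$ $AB$-alternating and contradicting $D\in\overline{\mathcal{F}}_{AB}$ (alternatively, $x$ becomes adjacent to all of $X_D\ni v$ or $v^+$, contradicting $x\in\overline{V}_C(vv^+)$). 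With this replacement for your last paragraph the argument is complete; you should also note that $\omega(G-W)\geq 2$ before invoking toughness, which holds because $X_D^{\mathrm{run}}\neq\emptyset$ (as $D$ is not $AB$-alternating) and its vertices are isolated in $G-W$.
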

    \begin{proof}
        If $E_G(X_C,X_D)=\emptyset$, then by Claim \ref{V_bad and X are disjoint}, $X_C \cup X_D \cup B_0$ is an independent set in $G$ of order $\frac{n}{2}$, contrary to $\alpha(G) \leq \frac{n}{3}$.
        Thus, we have $E_G(X_C,X_D) \neq \emptyset$.
        Let $u_0 \in X_C$ and $v_0 \in X_D$ be two vertices with $u_0v_0 \in E(G)$.
        If $u_0^+$ is of $A$-type, then $u$ is co-absorbable by Claim \ref{A-type}. 
        Since $X_D \cap A=\emptyset$, $v_0$ is of $B$-type. 
        Thus, we have $u_0^{++}v_0 \in E(G)$ by Claim \ref{B-type-co-abso}. 
        Since the cycle $C$ is not $AB$-alternating, we can choose $u_0 \in X_C$ such that $u_0v_0 \in E(G)$ and $u_0^+$ is of $B$-type.
        Since $u_0u_0^+$ is of $B$-type and $v_0 \in X_D \cap V_D(u_0u_0^+)$, we find that $X_D=V_D(u_0u_0^+)$ and $Y_D=\overline{V}_D(u_0u_0^+)$ by Claim \ref{B_edge}. 
        We can similarly find a required $B$-type edge $v_0v_0^+ \in E(D)$. 
    \end{proof}

    By Claim \ref{X is adjacent to B-edge}, we can take $B$-type edges $u_0u_0^+ \in E(C)$ and $v_0v_0^+ \in E(D)$ such that $V_D(u_0u_0^+)=X_D$ and $V_C(v_0v_0^+)=X_C$, respectively.
    Then we have
    \[X_C=\overline{V}_C(vv^+)=V_C(v_0v_0^+), \ Y_C=V_C(vv^+)=\overline{V}_C(v_0v_0^+),\]
    \[X_D=\overline{V}_D(uu^+)=V_D(u_0u_0^+) \text{ and } Y_D=V_D(vv^+)=\overline{V}_D(v_0v_0^+).\]
    In particular, we have $uu^+ \neq u_0u_0^+$ and $vv^+ \neq v_0v_0^+$.
    Moreover, by the symmetry of $X_C$ and $Y_C$ ($X_D$ and $Y_D$), Claims \ref{XY-alt_lem} and \ref{V_bad and X are disjoint} hold even if $X_C$ and $Y_C$ ($X_D$ and $Y_D$) are exchanged.
    Thus, we have $V_{bad}(C)=V_{bad}(D)=\emptyset$, which implies that every vertex in $V(C) \cup V(D)$ is of $B$-type.

    Since every edge on $C$ is of $B$-type, we can choose $uu^+$ and $u_0u_0^+$ so that $u^+=u_0$. 
    Then any vertex on $D$ is not adjacent to $u_0$ since $X_D=\overline{V}_D(uu^+)$ and $Y_D=\overline{V}_D(u_0u_0^+)$. 
    However, $u_0$ is adjacent to one of $\{v,v^+,v_0,v_0^+\}$ since $u_0 \in X_C \cup Y_C=V_C(v_0v_0^+) \cup V_C(vv^+)$, a contradiction. 
    This completes the proof of Theorem \ref{thm:main2}. 
    \qed

\end{prf3}

\section*{Acknowledgements}
We would like to express our gratitude to the referees for their helpful comments.
The first author's work (K. Ota) was supported by JSPS KAKENHI Grant Number 16H03952.
The second author's work (M. Sanka) was supported by JST Doctoral Program Student Support Project (JPMJSP2123).

\end{document}